\documentclass[twoside, a4paper, 10pt]{amsart}
\title[ ]{Twisted recurrence via polynomial walks}
\usepackage{amsaddr}
\author{Kamil Bulinski and Alexander Fish}
\address{School of Mathematics and Statistics F07, University of Sydney, NSW 2006, Australia}
\email{K.Bulinski@maths.usyd.edu.au}
\email{alexander.fish@sydney.edu.au}
\usepackage{amsfonts}
\usepackage{amsthm}
\usepackage{verbatim}
\usepackage{amsmath, amssymb}
\usepackage{tikz}
\usetikzlibrary{matrix, arrows}
\usepackage{listings}
\usepackage{color}
\usepackage{listings}
\usepackage[all]{xy}
\usepackage[pdftex,colorlinks,linkcolor=blue,citecolor=blue]{hyperref}
\usepackage{graphicx}
\usepackage{float}
\usepackage[margin=3cm]{geometry}
\usepackage{bigints}
\usepackage{dsfont}
\setlength{\textwidth}{6.5in}
\setlength{\oddsidemargin}{0in}
\setlength{\evensidemargin}{0in}
\setlength{\parindent}{0pt}
\setlength{\parskip}{1ex plus 0.5ex minus 0.2ex}
\linespread{1.3}
\begin{document}
\maketitle
\raggedbottom

%% Mathcal large
\newcommand{\cA}{\mathcal{A}}
\newcommand{\cB}{\mathcal{B}}
\newcommand{\cC}{\mathcal{C}}
\newcommand{\cD}{\mathcal{D}}
\newcommand{\cE}{\mathcal{E}}
\newcommand{\cF}{\mathcal{F}}
\newcommand{\cG}{\mathcal{G}}
\newcommand{\cH}{\mathcal{H}}
\newcommand{\cI}{\mathcal{I}}
\newcommand{\cJ}{\mathcal{J}}
\newcommand{\cK}{\mathcal{K}}
\newcommand{\cL}{\mathcal{L}}
\newcommand{\cM}{\mathcal{M}}
\newcommand{\cN}{\mathcal{N}}
\newcommand{\cO}{\mathcal{O}}
\newcommand{\cP}{\mathcal{P}}
\newcommand{\cQ}{\mathcal{Q}}
\newcommand{\cR}{\mathcal{R}}
\newcommand{\cS}{\mathcal{S}}
\newcommand{\cT}{\mathcal{T}}
\newcommand{\cU}{\mathcal{U}}
\newcommand{\cV}{\mathcal{V}}
\newcommand{\cW}{\mathcal{W}}
\newcommand{\cX}{\mathcal{X}}
\newcommand{\cY}{\mathcal{Y}}
\newcommand{\cZ}{\mathcal{Z}}
%% Mathbb large
\newcommand{\bA}{\mathbb{A}}
\newcommand{\bB}{\mathbb{B}}
\newcommand{\bC}{\mathbb{C}}
\newcommand{\bD}{\mathbb{D}}
\newcommand{\bE}{\mathbb{E}}
\newcommand{\bF}{\mathbb{F}}
\newcommand{\bG}{\mathbb{G}}
\newcommand{\bH}{\mathbb{H}}
\newcommand{\bI}{\mathbb{I}}
\newcommand{\bJ}{\mathbb{J}}
\newcommand{\bK}{\mathbb{K}}
\newcommand{\bL}{\mathbb{L}}
\newcommand{\bM}{\mathbb{M}}
\newcommand{\bN}{\mathbb{N}}
\newcommand{\bO}{\mathbb{O}}
\newcommand{\bP}{\mathbb{P}}
\newcommand{\bQ}{\mathbb{Q}}
\newcommand{\bR}{\mathbb{R}}
\newcommand{\bS}{\mathbb{S}}
\newcommand{\bT}{\mathbb{T}}
\newcommand{\bU}{\mathbb{U}}
\newcommand{\bV}{\mathbb{V}}
\newcommand{\bW}{\mathbb{W}}
\newcommand{\bX}{\mathbb{X}}
\newcommand{\bY}{\mathbb{Y}}
\newcommand{\bZ}{\mathbb{Z}}

\newcounter{dummy} \numberwithin{dummy}{section}

\theoremstyle{definition}
\newtheorem{mydef}[dummy]{Definition}
\newtheorem{prop}[dummy]{Proposition}
\newtheorem{corol}[dummy]{Corollary}
\newtheorem{thm}[dummy]{Theorem}
\newtheorem{lemma}[dummy]{Lemma}
\newtheorem{eg}[dummy]{Example}
\newtheorem{notation}[dummy]{Notation}
\newtheorem{remark}[dummy]{Remark}
\newtheorem{claim}[dummy]{Claim}
\newtheorem{Exercise}[dummy]{Exercise}
\newtheorem{question}[dummy]{Question}

\newtheorem*{thm*}{Theorem}

\begin{abstract} In this paper we show how polynomial walks can be used to establish a twisted recurrence for sets of positive density in $\bZ^d$. In particular, we prove that if  $\Gamma \leq GL_d(\bZ)$ is finitely generated by unipotents and acts irreducibly on $\bR^d$, then for any set $B \subset \bZ^d$ of positive density,  there exists $k \geq 1$ such that for any $v \in k \bZ^d$ one can find $\gamma \in \Gamma$ with $\gamma v \in B - B$. Our method does not require the linearity of the action, and we prove a twisted recurrence for semigroups of maps from $\bZ^d$ to $\bZ^d$ satisfying some irreducibility and polynomial assumptions. As one of the consequences, we prove a non-linear analog of Bogolubov's theorem -- for any set $B \subset \bZ^2$ of positive density, and $p(n) \in \bZ[n]$,  $p(0) = 0$, $\deg{p} \geq 2$ there exists $k \geq 1$ such that $k \bZ \subset \{ x - p(y) \, | \, (x,y) \in B-B \}$. 
Unlike the previous works on twisted recurrence that used recent results of Benoist-Quint and Bourgain-Furman-Lindenstrauss-Mozes on equidistribution of random walks on automorphism groups of tori, our method relies on the classical Weyl equidistribution for polynomial orbits on tori.
 \end{abstract} 
\section{Introduction}

\subsection{Background.} 
%In this paper we show that sets of positive density in $\bZ^{d}$ exhibit  'twisted recurrence' for certain algebraic patterns. 
One of the first discovered instances of recurrence is Furstenberg-Sark\"ozy theorem \cite{Fur77}. \cite{Sa}. It says that  if a set $B \subset \bZ$ has positive upper Banach density, i.e. 
\[
d^*(B) = \limsup_{b-a \to \infty} \frac{| B \cap [a,b)|}{b-a} > 0,
\]
then for any polynomial $p(n) \in \bZ[n]$, with $p(0) = 0$, there exists $n \geq 1$ such that $p(n) \in B-B$. Furstenberg related this statement to a claim on recurrence within measure-preserving systems. Namely, for any $\bZ$-measure preserving system $(X,\mu,T)$ and any measurable set $B \subset X$ with $\mu(B) > 0$ there exists $n \geq 1$ such that 
\[
\mu(B \cap T^{p(n)} B) > 0.
\]
Later on, Furstenberg-Katznelson-Weiss \cite{FKW} showed by the ergodic method that for any measurable set $B \subset \bR^d$, $d \geq 2$, of positive upper Banach density, i.e.,
\[
d^*(B) = \limsup_{R \to \infty} \sup_{x \in \bR^d} \frac{|B \cap (x +  Q_R)|}{|Q_R|} > 0,
\]
where $Q_R = [0,R) \times [0,R) \ldots \times [0,R) \subset \bR^d$, and the $| \cdot |$ denotes the Lebesgue measure,
there exists $R_0 > 0$ such that the set of distances in $B$, $$\Delta_B = \{ \| b_1 - b_2 \|^2 \,\,  | \,\,  b_1,b_2 \in B \},$$ contains $[R_0,\infty)$.
\subsection{Earlier work on twisted patterns.} We recall that for a set $B \subset \bZ^d$, the upper Banach density of $B$ is defined by
\[
d^*(B) = \limsup_{R \to \infty} \sup_{x \in \bZ^d} \frac{|B \cap (x +  Q_R)|}{R^d},
\]
where $Q_R = [0,R) \times [0,R) \ldots \times [0,R) \subset \bR^d$, and the $| \cdot |$ denotes the cardinality of finite sets. Bohr sets are important examples of the positive upper Banach density sets. Recall, that if $\tau$ is a homomorphism from $\bZ^d$ to a compact group $K$ with dense image, and $U \subset K$ is open, then the set $B = \tau^{-1}(U)$ is called a Bohr set. If, in addition, $K$ is connected, then $B$ is an aperiodic Bohr set.
The following result of Magyar on the distances in positive density subsets of $\bZ^d$  may be viewed as a discrete analogue of the Furstenberg-Katznelson-Weiss theorem.

\begin{thm}[Magyar \cite{Magyar}] Let $B \subset \bZ^d$, where $d \geq 5$,  be a set of positive upper Banach density.
Then there exists a positive integer $q$ such that $$q\bZ_{\geq 0} \subset \left \{ \| b_1-b_2 \|^2 \text{ }| \text{ } b_1,b_2 \in B \right \}.$$

\end{thm}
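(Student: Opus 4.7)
The plan is to reduce the combinatorial statement to a recurrence statement via the Furstenberg correspondence principle and then exploit equidistribution of lattice points on spheres in $\bZ^d$ for $d\geq 5$. By the standard correspondence principle I can build a $\bZ^d$-measure preserving system $(X,\cB,\mu, (T^v)_{v\in\bZ^d})$ and a measurable set $A\subset X$ with $\mu(A)=d^*(B)>0$ so that $\mu(A\cap T^{v}A)>0$ implies $v \in B-B$. It suffices therefore to find an integer $q\geq 1$ such that for every sufficiently large $n\in q\bZ_{\geq 0}$ there exists $v\in\bZ^d$ with $\|v\|^2=n$ and $\mu(A\cap T^{v}A)>0$.

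Introduce the normalised spherical averaging operator
\[
\cA_n f \;=\; \frac{1}{r_d(n)}\sum_{\|v\|^2=n} f\circ T^{v}, \qquad r_d(n)=\#\{v\in\bZ^d:\|v\|^2=n\}.
\]
It is enough to prove the lower bound $\langle \cA_n 1_A,1_A\rangle > 0$ for large $n$ in a suitable residue class. I would then split $L^2(\mu)$ orthogonally into the closed subspace $\cH_{\mathrm{rat}}$ spanned by eigenfunctions of the $\bZ^d$-action whose eigencharacters lie in the rational torus $(\bQ/\bZ)^d\subset\bT^d$, and its complement $\cH_\perp$. Writing $1_A=f_{\mathrm{rat}}+f_\perp$, note that $f_{\mathrm{rat}}$ has the same integral as $1_A$ since constants are rational eigenfunctions.

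The analytic heart of the proof is the estimate, for each nontrivial character $\chi\in\bT^d$,
\[
S_n(\chi)\;:=\;\frac{1}{r_d(n)}\sum_{\|v\|^2=n}e^{2\pi i \langle\chi,v\rangle} \;\longrightarrow\; 0 \quad\text{if $\chi$ is irrational},
\]
and for rational $\chi$ the limit exists and depends only on $n \bmod q$ where $q$ is a denominator of $\chi$. This equidistribution of lattice points on spheres is valid precisely for $d\geq 5$, via the Hardy-Littlewood circle method together with Kloosterman-type bounds on the minor arcs. Applying this to the spectral measure of $1_A$ shows the $\cH_\perp$-contribution to $\langle \cA_n 1_A,1_A\rangle$ tends to $0$, while on $\cH_{\mathrm{rat}}$, $\cA_n$ converges along $n\in q\bZ$ (for a $q$ chosen to approximate the countably many eigencharacters of $1_A$ that contribute substantially) to a positive operator whose quadratic form on $f_{\mathrm{rat}}$ is bounded below by a positive multiple of $(\int f_{\mathrm{rat}} d\mu)^2 = \mu(A)^2$.

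The main obstacle is the estimate for $S_n(\chi)$: making it uniform in $\chi$ and quantifying the positivity of the main (major arc) term along arithmetic progressions require the local solubility of $\|v\|^2 = n \pmod{p^k}$ to behave well, which is exactly where $d\geq 5$ is used. Below this threshold, genuine obstructions to representation by sums of squares spoil the lower bound on $r_d(n)$, so the dimension restriction is structural rather than cosmetic.
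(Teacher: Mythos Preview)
The paper does not prove this theorem; it is quoted in the introduction as a known result of Magyar \cite{Magyar} to motivate the later twisted recurrence theorems, and no argument for it appears anywhere in the text. Consequently there is no paper proof to compare your proposal against.

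That said, your outline is a faithful sketch of the strategy behind Magyar's original argument: pass via Furstenberg correspondence to a $\bZ^d$-system, analyse the spherical averages $\cA_n$ spectrally, and use equidistribution of lattice points on spheres (Hardy--Littlewood circle method, valid for $d\geq 5$) to kill the irrational spectrum while controlling the rational eigenfunctions along a suitable arithmetic progression. The main nontrivial ingredient you correctly identify is the exponential sum estimate for $S_n(\chi)$ and the positivity of the singular series along $q\bZ$; these are precisely where the dimension restriction $d\geq 5$ enters. The only caveat is that your sketch suppresses the uniformity needed to pass from pointwise convergence of $S_n(\chi)$ to convergence of the integral against the spectral measure, and the verification that for \emph{every} large $n\in q\bZ$ one has $r_d(n)>0$ with the major-arc contribution bounded below; but these are exactly the technical points Magyar handles, so your proposal is on the right track for reconstructing that proof, just not for comparison with anything in the present paper.
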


A recent series of works, initiated by Bj\"orklund and the second author \cite{BFChar} and further developed by Bj\"orklund and the first author \cite{BBTwisted}, demonstrates that analogous results hold if one replaces the squared distance $\| \cdot \|^2$ with other quadratic forms or certain homogeneous polynomials. The techniques in those works were Ergodic Theoretic and exploited the fact that such functions were preserved by a sufficiently large and algebraically structured subgroup of $\operatorname{SL}_d(\bZ)$, to which one could apply recent measure rigidity and equidistribution results of Benoist-Quint \cite{BQ1} \cite{BQ3} and those of Bourgain-Furman-Lindenstrauss-Moses \cite{BFLM}. The general statement obtained in \cite{BBTwisted} may be formulated as follows.

\begin{thm}[Twisted patterns] \label{thm: twisted patterns intro} Let $\Gamma \leq \operatorname{SL}_d(\bZ)$ be a non-trivial finitely generated subgroup such that the linear action $\Gamma \curvearrowright \bR^d$ is irreducible and the Zariski closure $G \leq \operatorname{SL}_d(\bR)$ of $\Gamma$ is Zarisk-connected, a semi-simple Lie group and has no compact factors. Then for all sets $B \subset \bZ^d$ of positive upper Banach density and positive integers $m$, there exists a positive integer $k=k(B,m)$ such that the following holds: For all $a_1, \ldots, a_m \in k \bZ^d$ there exists $\gamma_1, \ldots, \gamma_m \in \Gamma$ and $b \in B$ such that $$\{\gamma_1 a_1, \ldots, \gamma_m a_m \} \subset B-b.$$ 

In particular, if $F: \bZ^d \to \mathcal{S}$ is a function ($\mathcal{S}$ is any set) preserved by $\Gamma \leq \operatorname{SL}_d(\bZ)$ (this means that $F \circ \gamma=F$ for all $\gamma \in \Gamma$) then for all $R_1, \ldots R_m \in F(k\bZ^d)$ there exists $b \in B$ such that $$ R_1, \ldots, R_m \in F(B-b). $$
Moreover, if $B$ is an aperiodic Bohr set, then  one may take $k = 1$.

\end{thm}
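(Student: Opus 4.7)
The plan is to use the Furstenberg correspondence principle to recast the combinatorial statement as a recurrence problem in ergodic theory, reduce that problem to the Kronecker factor of a $\bZ^d$-system, and then solve the reduced problem using polynomial orbits of unipotent elements of $\Gamma$ together with Weyl's equidistribution theorem on compact abelian groups. First I would apply Furstenberg correspondence to $B$, producing a $\bZ^d$-measure-preserving system $(X,\mu,\{T_v\}_{v\in\bZ^d})$ and a set $A\subset X$ of positive measure such that the desired conclusion follows from showing $\mu(A\cap T_{\gamma_1a_1}A\cap\cdots\cap T_{\gamma_ma_m}A)>0$ for suitable $\gamma_i$. Since $T$ is a linear $\bZ^d$-action, its Kronecker factor $K$ -- a compact abelian group with dense-image translation $\tau:\bZ^d\to K$ -- is characteristic for such returns, and the projected set $A'\subset K$ has positive Haar measure. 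By continuity of translations, if each $\tau(\gamma_ia_i)$ can be chosen in an arbitrarily small neighbourhood of $0\in K$ then the corresponding intersection is positive; since the $\gamma_i$ are chosen independently, this reduces the $m$-fold statement to the single-return statement: given $a\in k\bZ^d$, produce $\gamma\in\Gamma$ with $\tau(\gamma a)$ arbitrarily close to $0$.

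Next, I would use that the Zariski-connected, semisimple, no-compact-factors hypothesis on $\Gamma$ supplies unipotent elements $u\in\Gamma$ that together generate a subgroup of finite index. For such $u$ and any $v\in\bZ^d$ the orbit $n\mapsto u^nv$ is polynomial in $n$, so $n\mapsto\tau(u^nv)$ is a polynomial trajectory on $K$. Weyl's equidistribution theorem then asserts that it equidistributes on the closed subgroup $H(u,v)\leq K$ generated by the $\tau$-images of the non-constant polynomial coefficients. The integer $k=k(B,m)$ will be chosen to clear torsion in $K/K^\circ$, so that after passing to $k\bZ^d$ each $H(u,a)$ lies inside the connected component $K^\circ$, and the task becomes to show that the closure of $\sum_u H(u,a)$ equals $K^\circ$.

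The main obstacle is exactly this final density, which is where irreducibility enters crucially. Transferring the linear irreducibility of $\Gamma\curvearrowright\bR^d$ to the compact abelian setting should be handled by a Pontryagin-duality argument, identifying any proper $\Gamma$-invariant closed subgroup of $K^\circ$ with a proper $\Gamma$-invariant submodule of its discrete dual and pulling that back along $\tau$ to a proper rational $\Gamma$-invariant subspace of $\bR^d$, contradicting irreducibility. Carrying out this duality argument cleanly is the heart of the proof. In the aperiodic Bohr case $K$ is already connected, so no torsion clearance is required and one may take $k=1$ directly; the consequence for $\Gamma$-invariant functions $F$ follows immediately on applying $F$ to both sides of the containment $\gamma_ia_i\in B-b$.
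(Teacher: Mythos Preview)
There is a genuine gap. Your central structural claim --- that the Zariski-connected, semisimple, no-compact-factors hypothesis on the Zariski closure of $\Gamma$ ``supplies unipotent elements $u\in\Gamma$ that together generate a subgroup of finite index'' --- is false under the stated hypotheses. The paper itself points this out explicitly: for $Q(x,y,z)=x^2+y^2-3z^2$, the group $\operatorname{SO}(Q)(\bZ)$ is a cocompact lattice in $\operatorname{SO}(Q)(\bR)$ and therefore contains \emph{no} nontrivial unipotent elements whatsoever, yet it satisfies the hypotheses of the theorem (irreducible action, semisimple Zariski closure with no compact factors). Without unipotents there are no polynomial orbits $n\mapsto u^n v$ to feed into Weyl equidistribution, and your argument collapses at exactly the point you flag as ``the heart of the proof.''

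What you have sketched is, in effect, the paper's own elementary method --- Furstenberg correspondence, reduction to the (rational) Kronecker factor, polynomial walks from unipotents, Weyl equidistribution --- which the paper uses to prove the \emph{weaker} Theorem~1.5 (where $\Gamma$ is assumed to be generated by unipotents). The paper is explicit that this elementary route does not recover Theorem~1.2 in full generality; the anisotropic orthogonal groups are precisely the cases it cannot reach. The paper does not prove Theorem~1.2 at all: it quotes it from \cite{BFChar} and \cite{BBTwisted}, where the proof rests on the deep measure-rigidity results of Benoist--Quint (or alternatively Bourgain--Furman--Lindenstrauss--Mozes). Those inputs are what handle the non-unipotent case, and your proposal offers no substitute for them.

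A secondary issue: your reduction ``the Kronecker factor is characteristic for such returns, so it suffices to make each $\tau(\gamma_i a_i)$ small'' is stated too loosely. Individual translates $T_{v}\mathds{1}_A$ are not close to $P_{\mathrm{Kron}}\mathds{1}_A$, so one cannot simply pass to $K$ pointwise; one needs either an averaging argument (as the paper does, via $L^2$-limits of polynomial ergodic averages) or an appeal to the fact that large-intersection return times form a Bohr$_0$ set. This is repairable, but the unipotent gap is not.
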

\medskip

Note that the $m=1$ case was obtained by Bj\"orklund and the second author in \cite{BFChar} and is an analogue of Magyar's theorem (with $F$ playing the role of $\| \cdot \|^2$). For $m \geq 1$, this result is an analogue of certain \textit{pinned distance} results (cf. \cite{LyallMagyar})

We now explore some examples observed in the aforementioned papers (we will mainly focus on the $m=1$ case).

\begin{eg}[Non-positive-definite quadratic forms, see \cite{BBTwisted}] \label{eg: non-def quad forms intro} Fix integers $p,q \geq 1$, with $d:=p+q \geq 3$ and let $Q:\bZ^{d} \to \bZ$ by an integral quadratic form of signature $p,q$ (for example, $Q:\bZ^3 \to \bZ$ given by $Q(x,y,z)=x^2+y^2-z^2$). Then the integral special orthogonal $$\Gamma=\operatorname{SO}(Q)(\bZ) := \{ \gamma \in \operatorname{SL}_d(\bZ) \text{ } | \text{ } Q \circ \gamma=\gamma \}$$ of $Q$ satisfies the hypothesis of Theorem~\ref{thm: twisted patterns intro}. Hence we obtain the following analogue of Magyar's theorem for $Q$: For all $B \subset \bZ^d$, of positive upper Banach density, there exists an integer $k\geq 1$ such that $$Q(B-B) \supset Q(k\bZ^d) = k^2 Q(\bZ^d).$$   \end{eg}

\begin{eg}[Characteristic polynomials of traceless integer matrices, see \cite{BFChar}] \label{eg: char poly intro} Let $n \geq 2$ and let $\mathfrak{sl}_n(\bZ)$ denote the integer matrices of trace zero, which is isomorphic (as an abelian group) to $\bZ^{n^2-1}$ and hence has a notion of upper Banach density (we fix an identification of $\mathfrak{sl}_n(\bZ)$ with $\bZ^{n^2-1}$ for the remainder of this example) . It turns out that the \textit{adjoint representation} $$\operatorname{Ad}: \operatorname{SL}_n (\bZ) \curvearrowright \mathfrak{sl}_n(\bZ) \cong \bZ^{n^2-1},$$ given by $\operatorname{Ad}(g)A=gAg^{-1}$ for $g \in \operatorname{SL}_n (\bZ)$ and $A \in  \mathfrak{sl}_n(\bZ)$ has the desired properties; namely, $\Gamma=\operatorname{Ad}(\operatorname{SL}_n(\bZ)) \leq \operatorname{SL}_d(\bZ)$ (where $d=n^2-1$) satisfies the hypothesis of Theorem~\ref{thm: twisted patterns intro}. Since the adjoint representation preserves characteristic polynomials, we may consider the characteristic polynomial map $$\operatorname{char}: \mathfrak{sl}_n(\bZ) \to \bZ[t]$$ given by $\operatorname{char}(A)=\operatorname{det}(A-t \operatorname{I})$ and conclude that for all $B \subset \mathfrak{sl}_n(\bZ)$ of positive upper Banach density, we have that $\operatorname{char}(B-B)$ contains $$\operatorname{char}(k \text{ } \mathfrak{sl}_n(\bZ)) = \left \{ \sum_{j=0}^n k^{n-j} a_j t^j \text{ }| \text{ } a_0, \ldots a_n \in \bZ \text{ with } a_n=1 \text{ and }a_{n-1} =0. \right \} \subset  \bZ[t]$$ for some integer $k\geq 1$. Likewise, the determinant map $\operatorname{det}: \mathfrak{sl}_n(\bZ) \to \bZ$ is preserved by the adjoint representation and hence for all $B \subset \mathfrak{sl}_n(\bZ)$ of positive upper Banach density, we have that $\operatorname{det}(B-B)$ contains a non-trivial subgroup $k\bZ$, for some integer $k \geq 1$. \end{eg}

\subsection{A self-contained approach} As alluded to above, the proof of Theorem~\ref{thm: twisted patterns intro} given in \cite{BFChar} and \cite{BBTwisted} relies on very deep results\footnote{Alternatively, one can use the work of Bourgain-Furman-Lindenstrauss-Mozes \cite{BFLM}, see e.g. \cite{Fish}} of Benoist-Quint obtained in \cite{BQ1} and \cite{BQ3}. One of the goals of this paper is to develop a much more elementary approach (by avoiding these works of Benoist-Quint) to such extensions of Magyar's theorem, which will also allow us to furnish some new examples that are not obtainable from the previous works \cite{BFChar} and \cite{BBTwisted}. We begin with one such general result that we are able to obtain by completely self-contained and classical means.

\begin{thm} \label{thm: unipotent generators intro}  Let $\Gamma \leq \operatorname{GL}_d(\bZ)$, where $d \geq 2$, be a subgroup such that 

\begin{enumerate}
	\item The linear action of $\Gamma$ on $\bR^d$ is irreducible.
	\item There exists a finite set $S \subset \Gamma$ of unipotent matrices which generate $\Gamma$.

\end{enumerate}

Then for all $B \subset \bZ^d$ of positive upper Banach density and integers $m \geq 1$, there exists a positive integer $k=k(B,m)$ such that the following holds: For all $a_1, \ldots, a_m \in k\bZ^d$ there exists $\gamma_1, \ldots, \gamma_m \in \Gamma$  and $b \in B$ such that $\{\gamma_1a_1, \ldots, \gamma_m a_m\} \in B-b$.

\end{thm}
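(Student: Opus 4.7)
The plan is to combine Furstenberg's correspondence principle with Weyl equidistribution for polynomial walks on the Kronecker factor of the associated $\bZ^d$-system. Via Furstenberg's correspondence I transfer to a measure-preserving $\bZ^d$-system $(X,\mu,T)$ with a measurable set $A \subset X$, $\mu(A) = d^*(B) > 0$; it then suffices to produce, for some $k = k(B,m)$, choices $\gamma_i \in \Gamma$ with
\[
\mu(A \cap T^{\gamma_1 a_1}A \cap \cdots \cap T^{\gamma_m a_m}A) > 0
\]
for every $a_1, \ldots, a_m \in k\bZ^d$, since any $b$ in the intersection yields the required simultaneous translate $\{\gamma_i a_i\} \subset B - b$. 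Passing to the Kronecker factor $(K, m_K)$ with factor map $\phi: \bZ^d \to K$ and $f := E(1_A \mid K)$, continuity and positivity of the correlation $\int_K f \prod_i T_K^{\phi(v_i)} f \, dm_K$ at the origin produce an open neighborhood $U \ni 0$ in $K$ so that $\phi(\gamma_i a_i) \in U$ for every $i$ forces the desired multi-correlation positivity.

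Writing $K$ as an inverse limit of finite-dimensional compact abelian groups, it suffices to treat $K = \bT^r \times F$ with $F$ finite and $\phi = (\phi_0, \phi_1)$ having dense image. The finite factor $F$ is killed by choosing $k$ divisible by its exponent: then $\phi_1(\gamma a) = 0$ for every $a \in k\bZ^d$ and $\gamma \in \Gamma$, using $\gamma \in \operatorname{GL}_d(\bZ)$. So the question becomes: make $\phi_0(\gamma a) \in \bT^r$ arbitrarily close to $0$ for every $a \in k\bZ^d$, where $\phi_0(v) = Mv \bmod \bZ^r$ and density forces $\xi M \notin \bQ^d$ for every nonzero $\xi \in \bZ^r$ (indeed $\xi M \in \bQ^d$ would give a nonzero $\xi' = D\xi$ with $\xi' M \in \bZ^d$, contradicting density).

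The heart of the argument is polynomial walks: since each $u_i - I$ is nilpotent, $n \mapsto u_i^n = \sum_k \binom{n}{k}(u_i - I)^k$ is polynomial in $n$, and the composite walk $P(n_1, \ldots, n_\ell) := u_{i_1}^{n_1} \cdots u_{i_\ell}^{n_\ell}$ is polynomial in $(n_1, \ldots, n_\ell)$; so $n \mapsto P(n)a$ is a polynomial map $\bZ^\ell \to \bZ^d$. Weyl's equidistribution theorem reduces density of $\{\phi_0(P(n)a) : n \in \bZ^\ell\}$ in $\bT^r$ to verifying, for every nonzero $\xi \in \bZ^r$, that the polynomial $\xi M P(n) a$ has a nonconstant irrational coefficient. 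These coefficients are the integer vectors $(u_{i_1}-I)^{k_1} \cdots (u_{i_\ell}-I)^{k_\ell}a$; their $\bZ$-span (as $\ell$ and the $i_j, k_j$ vary) contains all $\gamma a - a$ with $\gamma \in \Gamma$, and this latter span is full rank in $\bZ^d$ by irreducibility: if $\gamma a - a \in W$ for every $\gamma \in \Gamma$ then $W$ is $\Gamma$-invariant via the identity $\gamma_2(\gamma_1 a - a) = (\gamma_2\gamma_1 a - a) - (\gamma_2 a - a)$, forcing $W = 0$, which is impossible for $d \geq 2$ and $a \neq 0$. Combined with $\xi M \notin \bQ^d$, a linear-algebra argument produces a coefficient $z$ with $\xi M \cdot z$ irrational, verifying Weyl's criterion and so providing $\gamma = P(n) \in \Gamma$ with $\phi_0(\gamma a) \in U$.

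The main obstacle I expect is the \emph{Kronecker-suffices} reduction for $m \geq 2$: the Kronecker factor is not generally characteristic for $(m+1)$-point correlations, so the bridge from $\phi(\gamma_i a_i) \in U$ for all $i$ to positivity of the multi-correlation in the full system requires care. I would exploit the freedom in choosing $\gamma_i$ inductively --- applying single recurrence to the set $A \cap T^{\gamma_1 a_1}A$ (of positive measure by the $m = 1$ case) with respect to $a_2$ to find $\gamma_2$, and iterating; the ``extra'' correlations of the form $T^{\gamma_1 a_1 + \gamma_2 a_2} A$ that appear are harmless and only strengthen the conclusion. Uniformity of $k$ across $a \in k\bZ^d$ is a secondary technical point handled by the inverse-limit approximation of $K$ together with the fact that all the irrationality witnesses produced above depend only on $M$ and not on $a$.
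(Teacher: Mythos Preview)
Your overall framework --- Furstenberg correspondence, polynomial walks built from unipotent generators, Weyl equidistribution, and a Kronecker--type factor --- is exactly the right one and matches the paper's strategy. The construction of hyperplane--fleeing polynomial orbits via the span of the coefficients $(u_{i_1}-I)^{k_1}\cdots(u_{i_\ell}-I)^{k_\ell}a$ and irreducibility is essentially the paper's Theorem~\ref{thm: hyperplane fleeing polynomial walks}, just phrased differently.

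The genuine gap is the step you label ``Kronecker suffices'': the implication
\[
\phi(\gamma_i a_i)\in U \text{ for all }i \ \Longrightarrow\ \mu\Bigl(A\cap \bigcap_i T^{\gamma_i a_i}A\Bigr)>0
\]
is \emph{false already for $m=1$}. Take the Bernoulli $\bZ^d$--shift on $\{0,1\}^{\bZ^d}$: the Kronecker factor is trivial, so $\phi(v)=0\in U$ for every $v$ and every $U$, yet for the cylinder $A=\{x_{(0,0)}=0,\ x_{e_1}=1\}$ one has $\mu(A\cap T^{e_1}A)=0$. Controlling the Kronecker projection of a \emph{single} translate says nothing about the orthogonal (weakly mixing) component, which can kill the correlation. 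Your proposed induction for $m\ge 2$ inherits this problem and, in addition, would make $k$ depend on the intermediate sets $A\cap T^{\gamma_1 a_1}A$, hence on $a_1$, destroying uniformity.

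The fix --- and this is what the paper does --- is to \emph{average} along the polynomial walk rather than hit a single neighbourhood. For a hyperplane--fleeing walk $P(n)$ the spectral theorem plus Weyl give
\[
\frac{1}{N}\sum_{n=1}^N T^{P(n)}\mathds{1}_A \ \longrightarrow\ Q_P\mathds{1}_A \quad\text{in }L^2,
\]
and $Q_P\mathds{1}_A$ is within $\epsilon$ of $P_{\textbf{Rat}}\mathds{1}_A$, the projection onto the \emph{rational} Kronecker factor, once $P(\bZ)\subset k\bZ^d$ for a suitable $k=k(A,\epsilon)$ (this is the paper's Proposition~\ref{prop: polynomial average approximates P_rat}; your finite--factor observation is the mechanism behind this). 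Averaging the $m$ walks independently then yields
\[
\lim_{N_1,\ldots,N_m}\frac{1}{N_1\cdots N_m}\sum_{n_1,\ldots,n_m}\mu\Bigl(A\cap\bigcap_i T^{P_i(n_i)}A\Bigr)
\ >\ \int \mathds{1}_A\,(P_{\textbf{Rat}}\mathds{1}_A)^m\,d\mu-\epsilon\ \ge\ \mu(A)^{m+1}-\epsilon,
\]
the last inequality by Jensen. This handles all $m$ at once, with $k$ depending only on $(A,m,\epsilon)$, and bypasses your induction entirely. The two missing ideas relative to your outline are thus: (i) replace ``hit $U$'' by ``average over the walk'', and (ii) work with the rational Kronecker factor, whose projection is the actual $L^2$--limit of these polynomial averages.
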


\begin{remark}
In this and all subsequent theorems, in the case of $B$ being an aperiodic Bohr set,  one may take $k = 1$.
\end{remark}

In particular, we recover Example~\ref{eg: char poly intro} as well as some (but not all) of the cases in Example~\ref{eg: non-def quad forms intro}. To see this, note that $\operatorname{SL}_n(\bZ)$ is generated by a finite set of unipotents (for example, elementary matrices) and if $g \in \operatorname{SL}_n(\bZ)$ is unipotent, then so is the endomorphism $\operatorname{Ad}(g): A \mapsto gAg^{-1}$ (for irreducibility, see Appendix~\ref{appendix: irreducibility zariski dense}). On the other hand, there are no non-trivial unipotent matrices in the group $\operatorname{SO}(Q)(\bZ)$, where $Q(x,y,z)=x^2+y^2-3z^2$. To see this, note that $Q(v) \neq 0$ for all non-zero $v \in \bQ^3$, and hence, by Proposition 5.3.4 in \cite{MorrisArithmetic}, $\operatorname{SO}(Q)(\bZ)$ is a cocompact lattice in $\operatorname{SO}(Q)(\bR)$, from which it follows, by Corollary 4.4.4 in \cite{MorrisArithmetic}, that $\operatorname{SO}(Q)(\bZ)$ contains no non-trivial unipotent elements. Let us mention however that we are still able to recover Example~\ref{eg: non-def quad forms intro} for many interesting quadratic forms.

\begin{eg} The group $\operatorname{SO}(Q)(\bZ)$, where $Q(x,y,z)=xy-z^2$ contains a subgroup $\Gamma$ that acts irreducibly on $\bR^3$ and is generated by unipotents. This is clear once we write $$Q(x,y,z)=\operatorname{det} \begin{pmatrix}
  z & -y \\
  x & -z
 \end{pmatrix}$$ and regard $Q$ as the determinant map on $\mathfrak{sl}_2(\bZ)$. So we may take $\Gamma=\operatorname{Ad}(\operatorname{SL}_2(\bZ))$, as observed in Example~\ref{eg: char poly intro}, which is generated by unipotents.

\end{eg}

\begin{eg}\label{eg: x^2 +y^2 - z^2 intro} Let $Q(x,y,z)=x^2-y^2-z^2$ and observe that $$Q(x,y,z)= \operatorname{det} \begin{pmatrix} z & -(x+y) \\ x-y & -z \end{pmatrix}.$$ Hence we may regard $Q$ as the determinant map on the abelian subgroup $$\Lambda = \left \{ \begin{pmatrix} a_{11} & a_{12} \\ a_{21} & a_{22} \end{pmatrix} \in \mathfrak{sl}_2(\bZ) \text{ }| \text{ } a_{21} \equiv a_{12} \mod 2 \right\}.$$ Notice however that $\Gamma=\operatorname{Ad}(\Gamma_0)$ preserves this abelian subgroup, where $$\Gamma_0 = \left \langle \begin{pmatrix}1 & 2 \\ 0 & 1 \end{pmatrix}, \begin{pmatrix}1 & 0 \\ 2 & 1 \end{pmatrix} \right \rangle,$$ and acts irreducibly on $\mathfrak{sl}_2(\bR)$ (see Appendix~\ref{appendix: irreducibility zariski dense}). Hence Theorem~\ref{thm: unipotent generators intro} applies. \end{eg}

In fact, we may extend this example to higher dimensions as follows.

\begin{prop} \label{prop: higher dimension} Consider the quadratic form $Q(x_1, \ldots, x_p,y_1, \ldots y_q)=x_1^2 + \cdots + x_p^2 - y_1^2 - \cdots - y_q^2$ where $p \geq 1$ and $q \geq 2$. Then $SO(Q)(\bZ)$ contains a subgroup acting irreducibly on $\bR^d$ that is generated by finitely many unipotent  elements. In particular, we recover the following non-definite analogue of Magyar's theorem: If $B \subset \bZ^{p+q}$ has positive upper Banach density, then $Q(B-B)$ contains a non-trivial subgroup of $\bZ$.

\end{prop}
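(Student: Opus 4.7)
The plan is to exhibit an explicit finitely generated subgroup $\Gamma \leq SO(Q)(\bZ)$ that acts irreducibly on $\bR^{p+q}$ and is generated by unipotent matrices, after which Theorem~\ref{thm: unipotent generators intro} applied to $\Gamma$ will yield the recurrence statement $k^2\bZ \subseteq Q(B - B)$. Rather than invoke the $\mathfrak{sl}_2$--identification of Example~\ref{eg: x^2 +y^2 - z^2 intro}, which does not directly generalize, I will build $\Gamma$ from Eichler transvections associated with a rational hyperbolic plane inside $(\bZ^{p+q}, Q)$.

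\textbf{Construction of $\Gamma$.} First I would set $v = e_1 + f_1$ and $w = e_1 - f_1$, where $e_i, f_j$ is the standard basis. Both are isotropic with $B(v,w) = 2$ (where $B$ is the bilinear form associated to $Q$), so $H := \bR v + \bR w$ is a hyperbolic plane with $H^\perp = \operatorname{span}\{e_2, \ldots, e_p, f_2, \ldots, f_q\}$. For each $y \in H^\perp \cap \bZ^{p+q}$ satisfying $Q(y) \in 2\bZ$, I define the Eichler transvection
\[
T_y(v) = v, \qquad T_y(w) = w - Q(y)\, v + 2y, \qquad T_y(x') = x' - B(y, x')\, v \ \text{for}\ x' \in H^\perp,
\]
and its twin $T'_y$ with the roles of $v$ and $w$ swapped. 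Routine checks (using $e_1 = (v + w)/2$ and $f_1 = (v - w)/2$, the parity condition on $Q(y)$ being exactly what secures integrality in the standard basis) show $T_y, T'_y \in SO(Q)(\bZ)$ and that each is unipotent. Taking $\{y_i\}$ to be a finite $\bZ$-basis of the sublattice $\{y \in H^\perp \cap \bZ^{p+q} : Q(y) \in 2\bZ\}$ (for instance the vectors $2e_{i+1}$ and $2f_{j+1}$), I set $\Gamma := \langle T_{y_i}, T'_{y_i}\rangle$.

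\textbf{Irreducibility.} This is the step I expect to be the main obstacle. Let $V \subseteq \bR^{p+q}$ be a nonzero $\Gamma$-invariant subspace and write $x = av + bw + x' \in V \setminus \{0\}$ with $x' \in H^\perp$. The key computations are
\[
T_y(x) - x = -(bQ(y) + B(y,x'))\, v + 2by, \qquad T_y(x) + T_{-y}(x) - 2x = -2bQ(y)\, v.
\]
In the principal case $b \neq 0$, choosing $y_i$ with $Q(y_i) \neq 0$ first extracts $v \in V$ from the second identity, then extracts each $y_i \in V$ from the first, so $\bR v + H^\perp \subseteq V$; the symmetric argument using $T'_y$ then produces $w \in V$, whence $V = \bR^{p+q}$. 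The exceptional cases $b = 0$ (further separated into $a = 0$ or $x' = 0$) reduce to the same conclusion by applying both families $T_y$ and $T'_y$ to $v$ or $x'$ individually, exploiting the non-degeneracy of $B$ on $H^\perp$ and the fact that $Q|_{H^\perp}$ has signature $(p-1, q-1)$ with $q - 1 \geq 1$, so some generator $y_i$ always has $Q(y_i) \neq 0$.

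\textbf{Conclusion.} With $\Gamma$ in hand, Theorem~\ref{thm: unipotent generators intro} (in the case $m = 1$) supplies an integer $k \geq 1$ such that for every $a \in k\bZ^{p+q}$ there is $\gamma \in \Gamma$ with $\gamma a \in B - B$. Since $\gamma$ preserves $Q$, this yields $k^2\, Q(\bZ^{p+q}) = Q(k\bZ^{p+q}) \subseteq Q(B - B)$. The hypothesis $q \geq 2$ guarantees $Q(\bZ^{p+q}) = \bZ$ (the ternary subform $x_1^2 - y_1^2 - y_2^2$ alone represents every integer), so $k^2\bZ \subseteq Q(B - B)$, a non-trivial subgroup.
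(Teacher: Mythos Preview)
Your argument is correct, but it takes a genuinely different route from the paper's. The paper does \emph{not} abandon the $\mathfrak{sl}_2$--identification of Example~\ref{eg: x^2 +y^2 - z^2 intro}; instead it bootstraps from it. For each triple of indices $1\le a\le p$, $1\le b<c\le q$ it takes the three-dimensional coordinate subspace $V_{a,b,c}=\operatorname{span}\{e_a,u_b,u_c\}$, on which $Q$ restricts to $x^2-y^2-z^2$, and invokes Example~\ref{eg: x^2 +y^2 - z^2 intro} to get a unipotently generated $\Gamma_{a,b,c}\le \operatorname{SO}(Q)(\bZ)$ acting irreducibly on $V_{a,b,c}$ and trivially on its coordinate complement. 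A short gluing lemma (if $\Gamma_0$ acts irreducibly on $W_0\oplus W$ and trivially on $W_1$, and $\Gamma_1$ acts irreducibly on $W\oplus W_1$ and trivially on $W_0$, then $\langle\Gamma_0,\Gamma_1\rangle$ is irreducible on $W_0\oplus W\oplus W_1$) is then applied iteratively to these overlapping three-dimensional blocks to obtain irreducibility on all of $\bR^{p+q}$.

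Your Eichler-transvection construction is a clean alternative: it produces explicit generators in one stroke, is intrinsic to the orthogonal group (no $\mathfrak{sl}_2$ detour, no Zariski-density input), and the irreducibility check via the identities $T_y(x)-x$ and $T_y(x)+T_{-y}(x)-2x$ is direct. The paper's approach, by contrast, is more modular --- the gluing lemma is reusable, and the three-dimensional base case is already in hand --- at the cost of a longer chain of dependencies. One minor remark: your parenthetical basis $\{2e_{i+1},2f_{j+1}\}$ spans only $2(H^\perp\cap\bZ^{p+q})$ rather than the full even-$Q$ sublattice, but since all you actually use is that the $y_i$ span $H^\perp$ over $\bR$ and each has $Q(y_i)\neq 0$, this is harmless.
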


Again, we refer the reader to Appendix~\ref{appendix: irreducibility zariski dense} for the details.

\subsection{A generalization to non-linear actions} 

It turns out that our approach to Theorem~\ref{thm: unipotent generators intro} may be generalized to non-linear semigroups. In particular, an analogous result holds for semigroups consisting of certain \textit{polynomial}, rather than linear, transformations on $\bZ^d$. This, in turn, may be used to obtain a Magyar-type result for certain non-homogeneous polynomials $F: \bZ^d \to \bZ$. Before stating the main result, we will firstly illustrate this phenomena with the following example.

\begin{thm} \label{thm: xy-P(z) is Magyar} Let $P(z) \in \bZ[z]$ be a polynomial of degree $\geq 2$, with $P(0)=0$, and let $F(x,y,z)=xy-P(z)$. Then for all $B \subset \bZ^3$ of positive upper Banach density there exists a positive integer $k=k(B)$ such that $k\bZ \subset F(B-B)$. More generally, for all positive integers $m$ there exists a positive integer $q=q(B,m)$ such that for all $R_1, \ldots, R_m \in q\bZ$ there exists $b \in B$ such that $$ \{ R_1, \ldots, R_M \} \subset F(B-b).$$ 

\end{thm}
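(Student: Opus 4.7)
The plan is to derive this theorem from the paper's main non-linear twisted recurrence result (the polynomial analogue of Theorem~\ref{thm: unipotent generators intro}), applied to a semigroup $\Gamma$ of polynomial self-maps of $\bZ^3$ preserving $F$. To construct $\Gamma$, for each $t \in \bZ$ I set
$$\phi_t(x,y,z) = \left( x + \tfrac{P(z+ty) - P(z)}{y},\ y,\ z + ty \right),$$
observing that $(z+ty)^k - z^k$ is divisible by $y$ in $\bZ[y,z,t]$, so the first coordinate is a genuine polynomial in $(y,z,t)$ and $\phi_t$ is a polynomial self-map of $\bZ^3$; a direct computation shows $F \circ \phi_t = F$. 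Symmetrically, for each $s \in \bZ$ I set
$$\psi_s(x,y,z) = \left( x,\ y + \tfrac{P(z+sx) - P(z)}{x},\ z + sx \right),$$
which also preserves $F$. Let $\Gamma$ be the semigroup generated by $\{\phi_t, \psi_s : t, s \in \bZ\}$; this is a semigroup of $F$-preserving polynomial maps generated by two one-parameter families of ``polynomial unipotents'', mirroring the two unipotent one-parameter subgroups that appear in the linear setting of Theorem~\ref{thm: unipotent generators intro}.

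Applying the paper's main non-linear theorem (granting that $\Gamma$ meets its irreducibility and polynomial hypotheses), for any $B \subset \bZ^3$ of positive upper Banach density and any $m \geq 1$ one obtains $k = k(B,m)$ such that for all $v_1, \ldots, v_m \in k\bZ^3$ there exist $\gamma_1, \ldots, \gamma_m \in \Gamma$ and $b \in B$ with $b + \gamma_i v_i \in B$ for every $i$. Since each $\gamma_i$ preserves $F$, this gives $F(v_i) = F(\gamma_i v_i) \in F(B - b)$. To translate this into the claim, I note that $F(ka, kb, 0) = k^2 ab$, so $F(k\bZ^3) \supset k^2 \bZ$. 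Setting $q = k^2$, for any $R_1, \ldots, R_m \in q\bZ$ I factor $R_i = k^2 a_i b_i$ and take $v_i = (ka_i, kb_i, 0) \in k\bZ^3$; this yields $b \in B$ with $\{R_1, \ldots, R_m\} \subset F(B - b)$, and the $m=1$ case recovers $q\bZ \subset F(B - B)$.

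The main obstacle I expect is verifying that $\Gamma$ satisfies the hypotheses of the general non-linear theorem. The non-linearity of $P$ in the $z$-variable (coming from $\deg P \geq 2$) is what should preclude proper invariant polynomial structure and drive the Weyl-equidistribution argument on the Kronecker factor of any Furstenberg-correspondence system; the symmetric pair $\{\phi_t\}$ and $\{\psi_s\}$ should ensure orbits of $\Gamma$ on the level sets of $F$ are rich enough to play the role that irreducible unipotent-generated subgroups play in the linear setting. Making this precise -- i.e., checking that the semigroup $\Gamma$ built from these two polynomial unipotent families does satisfy the irreducibility and polynomial assumptions of the main theorem -- is the technical heart of the argument.
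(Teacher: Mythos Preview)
Your approach is essentially identical to the paper's: your $\psi_s$ and $\phi_t$ are exactly the paper's polynomial walks $S_1(n)$ and $S_2(n)$, and your reduction via $F(ka,kb,0)=k^2ab$ (the paper uses $a=1$) is the same. Two clarifications will close the gap you flagged.

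First, the ``irreducibility'' hypothesis in Theorem~\ref{thm: non-linear twisted patterns intro} is not a global property of $\Gamma$ but a \emph{per-vector} condition: one needs each $\Gamma v_i$ to be hyperplane-fleeing. So you should not expect to prove something about $\Gamma$ once and for all; rather, you must check that the specific $v_i$ you feed into the theorem have hyperplane-fleeing orbits. This is why the paper restricts to vectors with $x_0\neq 0$ and then chooses $v_i=(k,kR_i/k^2,0)$, which always has first coordinate $k\neq 0$.

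Second, the verification you anticipate as ``the technical heart'' is in fact a few lines. The paper shows: if $x_0\neq 0$ then $H(n,x_0,z_0)=\frac{P(z_0+nx_0)-P(z_0)}{x_0}$ has degree $\deg P\geq 2$ in $n$ (leading term $C x_0^{\deg P -1}n^{\deg P}$), so any affine relation $ax+by+cz=d$ satisfied along $S_1(n)v_0$ forces $b=0$, and then $c=0$ since $z_0+nx_0$ is non-constant; thus $S_1(n)v_0$ can lie only in $\{x=x_0\}$. By the symmetric argument (once $y_0\neq 0$, which one arranges by first applying some $S_1(n)$), $S_2(n)v_0$ can lie only in $\{y=y_0\}$. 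Since these two hyperplanes are different, $\Gamma v_0$ is hyperplane-fleeing. This is precisely where $\deg P\geq 2$ is used, matching your intuition.
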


In particular, we recover Example~\ref{eg: non-def quad forms intro} for all quadratic forms of the form $Q(x,y,z)=xy-dz^2$. We remark that the the group of linear transformations which preserve $xy-z^3$ spectacularly fails the hypothesis of Theorem~\ref{thm: twisted patterns intro}, as it preserves the $z=0$ plane (hence the linear action is not irreducible) and is actually a finite group. We now turn to describing our general result. To do this, we begin by generalizing the notion of unipotency to non-linear polynomial transformations.

\begin{mydef} If $\Gamma \subset \{\bZ^d \to \bZ^d\}$ is a semigroup of mappings, then a \textit{polynomial walk in $\Gamma$} is a map $s:\bZ_{\geq 0} \to \Gamma$ such that

\begin{enumerate} 
	\item There exist polynomials $$q_1(t_1, \ldots, t_{d+1}), \ldots, q_d(t_1, \ldots, t_{d+1}) \in \bZ[t_1, \ldots, t_{d+1}]$$ such that for each $x=(x_1, \ldots, x_d) \in \bR^d$ and non-negative integer $n$ we have that $$s(n)x=(q_1(n,x_1, \ldots, x_d), \ldots, q_d(n, x_1, \ldots, x_d)).$$ 
	\item $s(0)=\operatorname{Id}_{\bR^d}$.
\end{enumerate} \end{mydef}

To see the connection to unipotency, observe that if $\gamma \in \operatorname{SL}_d(\bZ)$ is unipotent then the sequence $S(n)=\gamma^n$ is a polynomial walk in $\operatorname{SL}_d(\bZ)$. 

We now turn to extending the notion of an irreducible representation to non-linear actions. 

\begin{mydef} We say that a set $A \subset \bR^d$ is \textit{hyperplane-fleeing} if for all proper affine subspaces\footnote{By an affine subspace, we mean a translated linear subspace.} $W \subset \bR^d$ we have that $A \not \subset W$. A sequence $a_1,a_2, \ldots$ is defined to be \textit{hyperplane-fleeing} if the set $\{a_1, a_2, \ldots \}$ is hyperplane-fleeing.

\end{mydef}

For instance, a linear representation of a group $\Gamma$ on a $\bR^d$, where $d \geq 2$, is irreducible if and only if the $\Gamma$-orbit of each non-zero vector is hyperplane-fleeing. To see this, note that if $w \in \bR^d$ is a non-zero vector such that $\Gamma w \subset W$ for some proper affine subspace $W$, then, as $d \geq 2$, we can find $\gamma \in \Gamma$ such that $\gamma w-w \neq 0$ (otherwise $\bR w$ is a one dimensional representation). But then $\Gamma (\gamma w -w) \subset W-W$, is contained in a proper linear subspace. We are now in a position to state our main result, which is a non-linear generalization of Theorem~\ref{thm: unipotent generators intro}.

\begin{thm}\label{thm: non-linear twisted patterns intro} Let $\Gamma \subset \{ \bZ^d \to \bZ^d \}$ be a semigroup of maps and suppose that there exists a finite set $\{s_1, \ldots, s_r \}$ of polynomial walks in $\Gamma$ such that $ \{ s_j(n) \text{ } | \text{ } n \geq 0 \text{ and } j=1, \ldots, r \}$ generates $\Gamma$. Then for $B \subset \bZ^d$ of positive upper Banach density and $m \in \bZ_{>0}$, there exists a positive integer $k=k(B,m)$ such that the following holds: For all $v_1, \ldots, v_m \in k\bZ^d$ with each $\Gamma v_i$ hyperplane-fleeing, there exists $\gamma_1, \ldots, \gamma_m \in \Gamma$ such that $$ \gamma_1v_1, \ldots, \gamma_m v_m \in B-b \quad \text{for some } b \in B.$$

\end{thm}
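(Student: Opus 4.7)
The plan is to apply the Furstenberg correspondence principle to reduce the claim to a recurrence statement in a measure-preserving $\bZ^d$-system, to reduce the $m$-fold case to $m=1$ by iteration, and then to analyze polynomial walk averages through the Kronecker factor, where Weyl's equidistribution theorem for polynomial orbits provides the decisive input.

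First I apply the Furstenberg correspondence to $B\subset\bZ^d$, obtaining a probability $\bZ^d$-system $(X,\mathcal{B},\mu,T)$ and a set $A\subset X$ with $\mu(A)=d^*(B)$ such that positivity of $\mu\bigl(A\cap T^{-w_1}A\cap\cdots\cap T^{-w_m}A\bigr)$ delivers $b\in B$ with $\{w_1+b,\ldots,w_m+b\}\subset B$. The $m$-fold case reduces to $m=1$ iteratively: given $\gamma_1,\ldots,\gamma_{m-1}\in\Gamma$ already chosen, apply the $m=1$ assertion to the positive-measure set $A':=A\cap T^{-\gamma_1 v_1}A\cap\cdots\cap T^{-\gamma_{m-1}v_{m-1}}A$ to produce $\gamma_m$ with $\mu(A'\cap T^{-\gamma_m v_m}A')>0$, which forces positivity of the desired $(m+1)$-fold intersection a fortiori. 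The integer $k$ will be chosen uniformly over sets of measure at least a fixed lower bound depending only on $\mu(A)$ and $m$, which is possible because $k$ depends only on the profinite component of the Kronecker factor of $(X,\mu,T)$---a feature of the system, not of any particular set.

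For the $m=1$ case, let $\pi\colon X\to K$ denote the projection onto the Kronecker factor, $\tau\colon\bZ^d\to K$ the induced dense-image homomorphism, and $\tilde{f}=\bE(\mathbf{1}_{A'}\mid K)$. Pick a generating polynomial walk $s_j$ and average $\mu(A'\cap T^{-s_j(n)v}A')$ over $n\in[0,N)$. A van der Corput / PET-style degree reduction on $s_j(n)v$ in the variable $n$ (iterated differences $s_j(n+h)v-s_j(n)v$ have strictly lower degree) shows that the non-Kronecker component of $\mathbf{1}_{A'}$ contributes negligibly, so the limit reduces to the polynomial Cesaro average of
\[
\int_K \tilde{f}(z)\,\tilde{f}\bigl(z+\tau(s_j(n)v)\bigr)\,dm_K(z).
\]
Expanding $\tilde{f}=\sum_\chi c_\chi\chi$ in Fourier series on $K$ and applying Weyl's equidistribution theorem to each polynomial phase $n\mapsto\chi(\tau(s_j(n)v))$, the limit equals $\sum_{\chi\in S_j(v)}|c_\chi|^2\,\chi(\tau(v))$, where $S_j(v)$ consists of those characters whose polynomial phase is constant in $n$. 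Intersecting over all generating walks (and iterating the argument over finite compositions of them) yields a set $S(v)\subset\widehat{K}$ of characters $\chi$ for which $\phi:=\chi\circ\tau$ is constant modulo $\bZ$ on the orbit $\Gamma v$, with the trivial character alone contributing $\mu(A')^2>0$.

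It remains to extract positivity. If $\chi\in S(v)$ then $\Gamma v-v\subset\ker\phi$, and the hyperplane-fleeing hypothesis forces $\Gamma v-v$ to $\bZ$-span $\bR^d$; hence $\ker\phi$ is a finite-index subgroup of $\bZ^d$, so $\chi$ has finite order and thus factors through the profinite part $K_{\mathrm{fin}}$ of $K$. Truncating $\tilde{f}$ to its finitely many Fourier modes of $\ell^2$-mass at least $\epsilon$ restricts attention to finitely many profinite characters; choose $k$ such that $k\bZ^d$ lies in the common kernel of these characters pulled back through $\tau$, so that $\chi(\tau(kv))=1$ for every such $\chi$ and every $v\in\bZ^d$. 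The surviving terms $|c_\chi|^2\chi(\tau(kv))$ are then all real and non-negative, dominated (up to a truncation error of order $\epsilon$) by the principal term $\mu(A')^2$; choosing $\epsilon$ small gives strict positivity of the Cesaro limit and hence some $n$ with $\mu(A'\cap T^{-s_j(n)v}A')>0$. In the aperiodic Bohr set case $K_{\mathrm{fin}}$ is trivial and one may take $k=1$. The main obstacle I anticipate is the van der Corput/PET reduction to the Kronecker factor along a polynomial walk in a general $\bZ^d$-system, together with the bookkeeping needed to ensure that iterating this single-walk argument really captures the globally $\Gamma$-invariant characters $S(v)$ rather than only those invariant along a single walk.
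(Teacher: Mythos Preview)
Your reduction of the $m$-fold statement to $m=1$ by iteration has a circularity: in your own $m=1$ argument the integer $k$ is chosen so that $k\bZ^d$ lies in the joint kernel of the finitely many rational characters appearing with large Fourier coefficient in $\tilde f=\bE(\mathbf 1_{A'}\mid K)$. Those coefficients depend on $A'$, which in turn depends on the previously chosen $\gamma_1,\ldots,\gamma_{m-1}$ and on $v_1,\ldots,v_{m-1}$---vectors that were themselves required to lie in $k\bZ^d$. Your assertion that ``$k$ depends only on the profinite component of the Kronecker factor'' is not what your construction actually delivers. The paper avoids this by never iterating: it fixes $f=\mathbf 1_B$ once, proves a uniform estimate (its Proposition~3.3) that $Q_pf$ is $\epsilon$-close in $L^2$ to the projection $P_{\mathbf{Rat}}f$ for \emph{every} hyperplane-fleeing polynomial $p$ with $p(\bZ)\subset k\bZ^d$, and then averages over all $m$ walks simultaneously, so the limit is $\int \mathbf 1_B\,(P_{\mathbf{Rat}}\mathbf 1_B)^m\,d\mu\ge\mu(B)^{m+1}$ by Jensen. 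Here $k$ genuinely depends only on $\mathbf 1_B$ and $\epsilon/m$.

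A second gap is the one you flag yourself: a single generating walk $s_j$ need not make $s_j(n)v$ hyperplane-fleeing, so the set $S_j(v)$ of surviving characters after averaging along $s_j$ can contain irrational characters, and the corresponding terms $|c_\chi|^2\chi(\tau(v))$ need not be nonnegative. ``Intersecting over all generating walks and iterating'' does not produce a single average whose surviving characters are exactly the rational ones; one needs an actual polynomial walk whose orbit on $v$ is hyperplane-fleeing. The paper supplies this explicitly: given $\Gamma v$ hyperplane-fleeing, it shows (its Theorem~2.3) that some finite word $s_N(n^{e_N})\cdots s_1(n^{e_1})$ in the generators, with suitably chosen exponents $e_1<\cdots<e_N$, already gives a hyperplane-fleeing polynomial sequence in $n$. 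With that walk in hand, the reduction to the rational Kronecker factor goes through the spectral theorem and Weyl directly---no PET or van der Corput is needed, since only a single function is being averaged.
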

\medskip

By use of Theorem~\ref{thm: non-linear twisted patterns intro} we obtain the following non-linear extension of the classical Bogolubov's theorem \cite{Bog}.

\begin{thm}\label{Bogolubov} Let $F(x,y)=x-P(y)$ where $P(y) \in \bZ[y]$ is of degree $\geq 2$ with $P(0)=0$. Then for all $B \subset \bZ^2$ of positive upper Banach density, we have that $F(B-B)$ contains a non-trivial subgroup of $\bZ$.

\end{thm}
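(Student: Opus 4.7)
The plan is to apply Theorem~\ref{thm: non-linear twisted patterns intro} (with $m=1$) to a semigroup $\Gamma$ of polynomial maps $\bZ^2 \to \bZ^2$ that preserves $F$. The key idea is to let the $y$-coordinate translate by an arbitrary integer while the $x$-coordinate auto-compensates to keep $F(x,y)=x-P(y)$ constant.

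Concretely, for each integer $t \geq 0$ define
$$s^{+}(t)(x, y) = (x + P(y + t) - P(y),\; y + t), \qquad s^{-}(t)(x, y) = (x + P(y - t) - P(y),\; y - t).$$
Since $P \in \bZ[y]$, both expressions are polynomials in $(t, x, y)$ with integer coefficients and send $\bZ^2$ into itself, and $s^{\pm}(0) = \operatorname{Id}$; hence $s^{+}$ and $s^{-}$ are polynomial walks. A direct check gives $F \circ s^{\pm}(t) = F$, so the semigroup $\Gamma$ they generate preserves $F$, and $\Gamma$ satisfies the hypothesis of Theorem~\ref{thm: non-linear twisted patterns intro}. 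The theorem then yields an integer $k = k(B) \geq 1$ such that for every $v \in k\bZ^2$ with $\Gamma v$ hyperplane-fleeing, there exist $\gamma \in \Gamma$ and $b \in B$ with $\gamma v \in B - b$.

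To extract $k\bZ$ inside $F(B-B)$, fix $n \in k\bZ$ and set $v = (n, 0) \in k\bZ^2$. Using $s^{+}(t)(n,0) = (n + P(t), t)$ and $s^{-}(t)(n,0) = (n + P(-t), -t)$ (recall $P(0)=0$), one checks that $\Gamma v = \{(n + P(y), y) : y \in \bZ\}$, the reverse inclusion following from $F$-invariance. Since $\deg P \geq 2$, the $x$-coordinate is a non-linear polynomial in $y$, so this set is not contained in any affine line of $\bR^2$ and $\Gamma v$ is hyperplane-fleeing. Theorem~\ref{thm: non-linear twisted patterns intro} then supplies $\gamma \in \Gamma$ with $\gamma v \in B - B$, and $F$-invariance gives $n = F(v) = F(\gamma v) \in F(B-B)$. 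As $n \in k\bZ$ was arbitrary, $k\bZ \subset F(B-B)$.

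The only step requiring any creativity is the construction of the walks $s^{\pm}$: the naive translation $(x,y) \mapsto (x, y+t)$ fails to preserve $F$, but its $F$-preserving twist by $P(y+t)-P(y)$ happens to remain polynomial in $(t,y)$, which is exactly what the definition of a polynomial walk demands. The hyperplane-fleeing verification is the place where the hypothesis $\deg P \geq 2$ is essential; this is precisely the feature distinguishing the non-linear Bogolubov statement from its classical linear analogue, which would degenerate if $P$ were linear.
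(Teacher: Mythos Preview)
Your proof is correct and follows essentially the same route as the paper: build an $F$-preserving polynomial walk by translating $y$ and compensating in $x$, verify hyperplane-fleeing using $\deg P \geq 2$, then invoke Theorem~\ref{thm: non-linear twisted patterns intro} with $m=1$ and evaluate at $v=(n,0)$. The only difference is cosmetic: the paper uses the single walk $S(n)=s^{+}(n)$ and observes that already the sequence $S(n)v_0$ is hyperplane-fleeing for \emph{every} $v_0\in\bZ^2$ (since $x_0+P(y_0+n)-P(y_0)$ is non-linear in $n$), so your second walk $s^{-}$ and the explicit computation of the full orbit $\Gamma v$ are unnecessary.
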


\subsection{Ergodic formulation} As in the previous works \cite{BFChar} and \cite{BBTwisted}, the correspondence principle of Furstenberg \cite{Fur77} allows us to reduce Theorem~\ref{thm: non-linear twisted patterns intro} to the following Ergodic-theoretic \textit{twisted multiple recurrence} statement.

\begin{thm}\label{thm: non-linear twisted-multiple recurrence intro} Let $\Gamma \subset \{ \bZ^d \to \bZ^d \}$ be a semigroup of maps and suppose that there exists a finite set $\{s_1, \ldots, s_r \}$ of polynomial walks in $\Gamma$ such that $ \{ s_j(n) \text{ } | \text{ } n \geq 0 \text{ and } j=1, \ldots, r \}$ generates $\Gamma$. Suppose that $T: \bZ^d \curvearrowright (X,\mu)$ is a measure preserving system. Then for all $B \subset X$ with $\mu(B)>0$ and positive integers $m$ there exists a positive integer $k=k(B,m,\epsilon)>0$ such that the following holds: If $v_1, \ldots, v_m \in k\bZ^d$ are such that each orbit $\Gamma v_i$ is hyperplane-fleeing, then there exist $\gamma_1, \ldots, \gamma_m \in \Gamma$ such that $$\mu(B \cap T^{-\gamma_1 v_1}B \cap \cdots \cap T^{-\gamma_m v_m}B) > \mu(B)^{m+1}-\epsilon.$$

\end{thm}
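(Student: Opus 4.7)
The plan is to establish this measure-theoretic statement by an ergodic reduction to the Kronecker factor of $(X,\mu,T)$, followed by an application of the classical Weyl equidistribution theorem for polynomial orbits on compact abelian groups. Fix $B$ with $\mu(B)>0$, $\varepsilon>0$, and $m\ge 1$. It suffices by pigeonhole to exhibit an averaging scheme for tuples $(\gamma_1,\ldots,\gamma_m)\in\Gamma^m$ whose averaged intersection measure tends to at least $\mu(B)^{m+1}$; some tuple in the scheme then witnesses the asserted inequality. Since orbits of a single walk $s_j(n)v_i$ can lie in a proper affine subspace even when $\Gamma v_i$ is hyperplane-fleeing, the averaging must range over compositions: I would fix words $(j_1^{(i)},\ldots,j_L^{(i)})$ of sufficient length $L$ for each $i$ (using that $\Gamma$ is generated by the walks, $L$ can be chosen so that the orbit $\{\gamma_i(\vec n) v_i : \vec n\in\bZ_{\ge 0}^L\}$ already inherits the hyperplane-fleeing property), and consider the multiparameter Ces\`aro average
\[
A_N := \frac{1}{N^{Lm}}\sum_{\vec n^{(1)},\ldots,\vec n^{(m)}\in\{0,\ldots,N-1\}^L}\int_X 1_B(x)\prod_{i=1}^m 1_B\bigl(T^{-\gamma_i(\vec n^{(i)})v_i}x\bigr)\,d\mu(x),
\]
where $\gamma_i(\vec n) := s_{j_1^{(i)}}(n_1)\circ\cdots\circ s_{j_L^{(i)}}(n_L)$; each $\gamma_i(\vec n)v_i$ is then a polynomial in $\vec n$ with coefficients in $\bZ^d$.

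The next step is to reduce $\lim_N A_N$ to its evaluation on the Kronecker factor of the $\bZ^d$-action. Iterated van der Corput estimates, running as a PET-style induction on the polynomial degrees in the variables $n_k^{(i)}$, show that any $L^2$-component of $1_B$ orthogonal to the Kronecker factor contributes zero in the limit. Writing $(K,m_K,\chi)$ for the Kronecker model, with $\chi\colon\bZ^d\to K$ a homomorphism of dense image, one is reduced to computing
\[
\lim_{N\to\infty} \frac{1}{N^{Lm}}\sum_{\vec n^{(1)},\ldots,\vec n^{(m)}}\int_K \tilde f_0(x)\prod_{i=1}^m \tilde f_i\bigl(x+\chi(\gamma_i(\vec n^{(i)})v_i)\bigr)\,dm_K(x),
\]
where $\tilde f_i$ denotes the $L^2(K,m_K)$-projection of $1_B$.

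Each $\vec n\mapsto \chi(\gamma_i(\vec n)v_i)$ is a polynomial map from $\bZ^L$ to the compact abelian group $K$, so by Weyl equidistribution it equidistributes on the closed subgroup $H_i\le K$ it generates, and the $m$-fold joint orbit equidistributes on $H_1\times\cdots\times H_m$ by independence of the variables $\vec n^{(i)}$. The hyperplane-fleeing hypothesis on $\Gamma v_i$ ensures that for every non-trivial character $\psi\in\widehat K$ of sufficiently large order, the composition $\psi\circ\chi\circ\gamma_i(\cdot)v_i$ is non-constant modulo $1$; the finitely many bounded-denominator characters that could remain constant are neutralised by requiring $v_i\in k\bZ^d$ with $k$ divisible by those denominators. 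Since the denominators depend only on an $\varepsilon$-approximation of $\tilde f_i$ by a trigonometric polynomial on $K$, the choice $k = k(B,m,\varepsilon)$ is admissible, and with this $k$ one has $H_i=K$ for each $i$, so the displayed limit equals $\mu(B)^{m+1}$.

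The main obstacle will be the Kronecker reduction for \emph{compositions} of polynomial walks: composing $L$ walks of base degree $D$ can produce polynomial exponents of degree up to $D^L$, so the van der Corput induction must track these growing multi-degrees and verify that no spurious lower-degree degeneracy survives to obstruct the reduction. A secondary delicate point is verifying that the hyperplane-fleeing hypothesis, which is an affine statement in $\bR^d$, genuinely translates into joint equidistribution on $K^m$ after the choice of $k$ — this is where the flexibility of making $k$ depend on $\varepsilon$ (and thereby on the trigonometric-polynomial approximation of $\tilde f_i$) is essential.
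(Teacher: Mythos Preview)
Your outline is in the right spirit—average over compositions of the generating walks, pass to the Kronecker side, invoke Weyl, choose $k$ to tame the rational spectrum—but you are working much harder than the paper does, and there is one genuine gap.

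The PET/van der Corput machinery is a red herring. Because your averaging blocks $\vec n^{(1)},\ldots,\vec n^{(m)}$ are independent, the correlation factors as an iterated \emph{linear} average $f\mapsto N^{-L}\sum_{\vec n}T^{p(\vec n)}f$, and for such averages the spectral theorem already does all the work: writing $\langle T^v f,f\rangle=\int_{\bT^d}e(\langle v,\theta\rangle)\,d\sigma_f(\theta)$, one has $\|N^{-L}\sum_{\vec n}T^{p(\vec n)}f\|_2^2\to 0$ whenever $\sigma_f(\bQ^d/\bZ^d)=0$, since hyperplane-fleeing (i.e.\ $\bR$-linear independence of $1,p_1,\ldots,p_d$) forces $\langle p(\vec n),\theta\rangle$ to have an irrational non-constant coefficient for every irrational $\theta$. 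So the characteristic factor is the \emph{rational} Kronecker factor $L^2_{\textbf{Rat}}$, and your ``main obstacle'' of controlling degrees $D^L$ through a PET scheme never materialises. The paper simplifies further by collapsing the multiparameter walk to a single-variable one via $t_j\mapsto t^{e_j}$ for rapidly growing $e_j$, so only one-variable Weyl is ever invoked.

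The actual gap is in your neutralisation of the rational characters. You claim that taking $v_i\in k\bZ^d$ with $k$ divisible by the relevant denominators suffices, but what you need is $\gamma_i(\vec n)v_i\in k\bZ^d$ for every $\vec n$ in the averaging range, and an arbitrary polynomial walk does not preserve $k\bZ^d$. The paper's remedy is to observe that $s(0)0=0$ forces the polynomial $s(n)(x)$ to have vanishing constant term in $(n,x_1,\ldots,x_d)$, whence $s(kn)(k\bZ^d)\subset k\bZ^d$; one must therefore \emph{also} rescale the walk parameters, replacing each $s_j(n)$ by $s_j(kn)$ (equivalently, average over $\vec n\in (k\bZ_{\ge 0})^L$). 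Without this the rational eigenfunctions are not fixed by the average and your conclusion fails. A smaller point: even after this fix the limit is not literally $\mu(B)^{m+1}$ but rather (up to $\epsilon$) $\int \mathds{1}_B\,(P_{\textbf{Rat}}\mathds{1}_B)^m\,d\mu$, which is only $\ge \mu(B)^{m+1}$ by Jensen and the conditional-expectation property of $P_{\textbf{Rat}}$; the inequality, not equality, is what the argument actually yields.
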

\medskip

\textit{Acknowledgment.} A part of this work has been carried out while the second author was a Feinberg visiting scholar at  Weizmann Institute of Science, Israel. He would like to thank Omri Sarig for the hospitality and encouragement. The authors would like to thank Michael Bj\"orklund for encouraging and stimulating discussions.

\section{Constructing hyperplane-fleeing polynomial walks}

We now turn to the first part of the strategy of our proof, which is a construction of hyperplane-fleeing polynomial walks in hyperplane-fleeing orbits. We first state our result in the special case where our action is linear.

\begin{thm} \label{thm: hyerplane-fleeing walks for linear actions} Let $\Gamma \leq \operatorname{GL}_d(\bZ)$, where $d \geq 2$, be a subgroup such that 

\begin{enumerate}
	\item The linear action of $\Gamma$ on $\bR^d$ is irreducible.
	\item There exists a finite set $S \subset \Gamma$ of unipotent matrices which generate $\Gamma$.

\end{enumerate}

Then for all $v \in \bZ^d \setminus \{0\}$ there exists a sequence $\gamma_1, \gamma_2, \ldots \in \Gamma$  such that $$\gamma_n v=(p_1(n), \ldots, p_d(n))$$ for some polynomials $p_1(t), \ldots, p_n(t) \in \bZ[t]$ such that no non-trivial linear combination of these polynomials is constant (i.e., $1,p_1(t), \ldots, p_d(t)$ are linearly independent over $\bZ$). In other words, the sequence $\gamma_n v$ is hyperplane-fleeing.

\end{thm}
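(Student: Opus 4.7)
The plan is to build the walk inductively, starting from $\gamma_n \equiv \operatorname{Id}$ and at each step composing on the left with a power $s^{n^{D+1}}$ of a carefully chosen unipotent $s \in S$, where $D$ is the current degree of the walk, so that the affine hull of $\{\gamma_n v\}$ strictly enlarges. Since its dimension is bounded by $d$, the process terminates in at most $d-1$ steps with affine hull equal to $\bR^d$, which is precisely the hyperplane-fleeing condition.

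To organize the induction, given a polynomial walk $\gamma_n v = p(n) = \sum_i c_i n^i$ with $p(0) = v$, I set $U(p) := \operatorname{span}_{\bR}\{c_0, c_1, \ldots\}$; this equals $\bR v$ plus the direction of the affine hull, so the walk is hyperplane-fleeing exactly when $U(p) = \bR^d$. Whenever $U := U(p)$ is proper, I claim some generator $s \in S$ satisfies $sU \neq U$: the stabilizer $\{g \in \operatorname{GL}_d(\bR): gU = U\}$ is a subgroup, and the $s \in S$ are invertible (being unipotent), so if every $s \in S$ preserved $U$, then all of $\Gamma$ would too, contradicting irreducibility (recall $\bR v \subset U$, so $U \neq 0$).

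Given such an $s$, I replace $\gamma_n$ by $\gamma'_n := s^{n^{D+1}} \gamma_n$. Since $s$ is unipotent, $s^m = \sum_{k=0}^{d-1}\binom{m}{k}(s-I)^k$ is polynomial in $m$, and substituting $m = n^{D+1}$ yields
\[
\gamma'_n v \;=\; \sum_{k=0}^{d-1} \binom{n^{D+1}}{k}(s-I)^k p(n),
\]
a genuine polynomial in $n$. The key computation is that $U(\gamma'_n v)$ equals the smallest $s$-invariant subspace $V := U + (s-I)U + (s-I)^2 U + \cdots$ containing $U$. One direction is immediate. For the reverse, the exponent $n^{D+1}$ is chosen so that every $n^j$-coefficient of $\gamma'_n v$ corresponds to a unique decomposition $j = \ell(D+1) + i$ with $0 \leq i \leq D$, $\ell \geq 0$, and for fixed $i$ the coefficients at degrees $\ell(D+1) + i$ as $\ell$ ranges from $d-1$ down to $0$ form an upper-triangular system in $(s-I)^{d-1}c_i, (s-I)^{d-2}c_i, \ldots, c_i$, allowing me to peel off each $(s-I)^k c_i$ individually. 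Since $sU \neq U$ forces $(s-I)U \not\subset U$, I conclude $V \supsetneq U$ and $\dim U(\gamma'_n v) \geq \dim U + 1$.

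Iterating from $\gamma_n \equiv \operatorname{Id}$, where $U = \bR v$ has dimension $1$, the subspace $U$ fills $\bR^d$ in at most $d-1$ steps and produces the desired hyperplane-fleeing walk. The main technical obstacle is the triangular extraction above: the naive choice $s^n$ in place of $s^{n^{D+1}}$ genuinely fails — for instance in $\bR^3$ with $p(n) = (1, 0, n)$ and $s$ the elementary matrix adding row $1$ to row $2$, one has $s^n p(n) = (1, n, n)$, whose image spans a $U'$ of the same dimension as $U$. Choosing the higher exponent $n^{D+1}$ spreads contributions from distinct $(s-I)^k c_i$ across distinct $n$-degrees, avoiding such collisions and securing the strict dimension gain.
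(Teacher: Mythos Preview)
Your inductive scheme is natural and almost works, but the claimed equivalence ``the walk is hyperplane-fleeing exactly when $U(p)=\bR^d$'' is false, and this breaks the argument as written. With $p(n)=\sum_i c_i n^i$ and $c_0=p(0)=v$, the affine hull of $\{p(n):n\geq 0\}$ is $v+\operatorname{span}\{c_1,c_2,\ldots\}$, so hyperplane-fleeing is equivalent to $W(p):=\operatorname{span}\{c_i:i\geq 1\}=\bR^d$, not merely to $U(p)=\bR v+W(p)=\bR^d$. A concrete failure of your procedure: take $d=2$, $\Gamma=\operatorname{SL}_2(\bZ)$ with its standard unipotent generators, and $v=(1,0)$. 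The first step (with $D=0$) selects $s=\left(\begin{smallmatrix}1&0\\1&1\end{smallmatrix}\right)$ and produces $p(n)=s^{n}v=(1,n)$; now $U(p)=\bR^2$ so you halt, yet the sequence is confined to the line $x=1$ and is not hyperplane-fleeing. Your own closing example exhibits the same defect one dimension up: replacing $s^n$ by $s^{n^{2}}$ on $(1,0,n)$ yields $(1,n^{2},n)$, which has $U=\bR^3$ but is still trapped in $\{x=1\}$.

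The repair is to track $W(p)$ rather than $U(p)$. Your triangular extraction in fact gives $W(p')=W(p)+\sum_{k\geq 1}(s-I)^k\bigl(\bR v+W(p)\bigr)$; if this equalled $W(p)$ for every $s\in S$ then $(s-I)W(p)\subset W(p)$ and $(s-I)v\in W(p)$ for all $s$, whence $W(p)$ is $\Gamma$-invariant and so, by irreducibility, equals $0$ or $\bR^d$, while $W(p)=0$ would force $sv=v$ for all $s$ and make $\bR v$ invariant. With this correction the induction terminates in at most $d$ steps. The paper avoids the issue altogether by a different route: it forms the multivariable polynomial $s_N(t_N)\cdots s_1(t_1)v$ (cycling the $s_j$ through the generating walks), notes that the space of affine functionals annihilating its range is a decreasing chain that reaches $\{0\}$ for some $N$ because $\Gamma v$ is hyperplane-fleeing, and only then collapses to a single variable via the substitution $t_j=t^{e_j}$ with rapidly growing exponents.
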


Let us now return to the more abstract setting of non-linear actions. We begin with a simple lemma which will allow us to perform certain useful operations on polynomial walks.

\begin{lemma} \label{lemma: operations on polynomial walks} If $S,R: \bZ_{\geq 0} \to \Gamma$ are polynomial walks then:

\begin{enumerate} 
	
	\item The map $S\circ R:\bZ_{\geq 0} \to \Gamma$ given by $(S\circ R)(n)=S(n) \circ R(n)$ is a polynomial walk.
	\item If $\ell$ is a positive integer then $S(n^{\ell})$ is a polynomial walk.
	\item \label{item: scaling polynomial walk} For all integers $k$ and $n \geq 0$, we have that $S(k n)(k\bZ^d) \subset k\bZ^d$.

\end{enumerate}

\begin{proof}[Proof of (\ref{item: scaling polynomial walk})] By definition $S(0)0=\operatorname{Id}0=0$. This means that each entry of $S(k n)(k x_1, \ldots, k x_d)$ is an integer polynomial, with zero constant term, in $kn,kx_1, \ldots, kx_d$ and hence a multiple of $k$ whenever $n,x_1, \ldots, x_d$ are integers.  \end{proof}

\end{lemma}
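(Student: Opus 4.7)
The plan is to verify each of the three conclusions by directly unpacking the definition of a polynomial walk, which requires an integer-polynomial description of the coordinate functions, the condition $S(0) = \operatorname{Id}$, and each value lying in the semigroup $\Gamma$.

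For part (1), I would write $R$ and $S$ in coordinates using polynomials $r_i, s_j \in \bZ[t_1, \ldots, t_{d+1}]$, so that $R(n)x = (r_1(n,x), \ldots, r_d(n,x))$ and $S(n)y = (s_1(n,y), \ldots, s_d(n,y))$, and then observe that substituting the $r_i(n,x)$ for the $y_i$ in each $s_j(n, y_1, \ldots, y_d)$ yields an integer polynomial in $(n, x_1, \ldots, x_d)$, since composition of polynomials is polynomial. The identity at $n = 0$ is immediate from $R(0) = S(0) = \operatorname{Id}$, and membership in $\Gamma$ follows because $\Gamma$ is a semigroup under composition.

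Part (2) is even more direct: substituting the polynomial $n^\ell$ in place of the variable $n$ in each coordinate polynomial $s_j(n, x_1, \ldots, x_d)$ again produces an integer polynomial in $(n, x_1, \ldots, x_d)$. At $n = 0$ one has $S(0^\ell) = S(0) = \operatorname{Id}$ (using $\ell \geq 1$), and every value of this reparametrized walk remains in the image of $S$, hence in $\Gamma$. Part (3) has already been recorded in the excerpt, and rests on the observation that $S(0)\cdot 0 = 0$ forces each coordinate polynomial of $S(n)x$ to have zero constant term, so the substitution $(n, x_1, \ldots, x_d) \mapsto (kn, kx_1, \ldots, kx_d)$ makes every monomial a multiple of $k$.

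There is no substantial obstacle here; the lemma is a bookkeeping check that the class of polynomial walks is closed under composition and under reparametrization of the time variable by $n \mapsto n^\ell$. The only point worth flagging is to confirm that the $\operatorname{Id}$-at-zero condition survives under these operations; in part (1) this uses both walks being the identity at $0$ simultaneously, while in part (2) it uses that the reparametrization $n \mapsto n^\ell$ itself vanishes at $n = 0$.
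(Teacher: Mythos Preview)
Your proposal is correct and matches the paper's approach: the paper only writes out the proof of part~(3), treating parts~(1) and~(2) as immediate from the definition, and your arguments for those two parts are exactly the routine verifications the paper leaves implicit. Your proof of~(3) is essentially verbatim the paper's own.
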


The non-linear generalization of Theorem~\ref{thm: hyerplane-fleeing walks for linear actions} may now be formulated as follows.

\begin{thm} \label{thm: hyperplane fleeing polynomial walks} Let $\Gamma \subset \{ \bZ^d \to \bZ^d \}$ be a semigroup of maps and suppose that there exists a finite set $\{s_1, \ldots, s_r \}$ of polynomial walks in $\Gamma$ such that $ \{ s_j(n) \text{ } | \text{ } n \geq 0 \text{ and } j=1, \ldots, r \}$ generates $\Gamma$. Suppose that $v \in \bR^d$ satisfies the property that the orbit $\Gamma v$ is not contained in any proper affine subspace of $\bR^d$. Then there exists a polynomial walk $S:\bZ_{\geq 0} \to \Gamma$ such that $$S(n) v=(p_1(n), \ldots, p_d(n))$$ for some polynomials $p_1(t), \ldots, p_n(t) \in \bZ[t]$ such that no non-trivial $\bR$-linear combination of these polynomials is constant (i.e., $1,p_1(t), \ldots, p_d(t)$ are linearly independent over $\bR$). In other words, the sequence $S(n)(v)$ is \textit{hyperplane-fleeing}. 

\end{thm}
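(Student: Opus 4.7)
The plan is to construct the walk $S$ inductively, enlarging an associated linear subspace by at least one dimension at each stage. For any polynomial walk $R:\bZ_{\ge 0} \to \Gamma$, write $R(n) v = v + \sum_{k \ge 1} n^k w_k$ and set $L(R) := \mathrm{span}_\bR\{w_k : k \ge 1\} \subseteq \bR^d$; a Vandermonde argument identifies this with the linear span of $\{R(n)v - v : n \ge 0\}$. The conclusion of the theorem is equivalent to the existence of a polynomial walk $S$ with $L(S) = \bR^d$. Starting from the trivial walk $S_0 := \mathrm{Id}$, for which $L(S_0) = 0$, it therefore suffices to show that whenever $L(S) \subsetneq \bR^d$ there is a polynomial walk $S'$ in $\Gamma$ with $L(S') \supsetneq L(S)$. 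After at most $d$ iterations this produces the desired $S$.

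For the enlargement step, the hyperplane-fleeing hypothesis applied to the proper affine subspace $v + L(S)$ produces $\gamma \in \Gamma$ with $\gamma v - v \notin L(S)$. Writing $\gamma = s_{j_1}(m_1) \circ \cdots \circ s_{j_\ell}(m_\ell)$ in terms of the generators, I would introduce the auxiliary polynomial walk
$$T(n) := s_{j_1}(m_1 n) \circ \cdots \circ s_{j_\ell}(m_\ell n),$$
which by Lemma~\ref{lemma: operations on polynomial walks} is a polynomial walk with $T(0) = \mathrm{Id}$ and $T(1) = \gamma$. Let $D := \deg_n T$ and pick any integer $N > D$. Then define
$$S'(n) := T(n) \circ S(n^N),$$
which is again a polynomial walk by two applications of Lemma~\ref{lemma: operations on polynomial walks}.

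The heart of the argument is to show $L(S') \supsetneq L(S)$. Expand $T(n)(x) = x + \sum_{j=1}^{D} n^j B_j(x)$ and $S(m) v = v + \sum_{k \ge 1} m^k A_k(v)$, and set $h(n) := S(n^N) v - v = \sum_{k \ge 1} n^{kN} A_k(v)$, a polynomial in $n^N$ vanishing at $n=0$. Since each $B_j$ is a polynomial map, Taylor expansion gives $B_j(v + h(n)) = B_j(v) + (\text{polynomial in } n^N \text{ with zero constant term})$, so
$$S'(n) v = v + \sum_{j=1}^{D} n^j B_j(v) + \sum_{k \ge 1} n^{kN} A_k(v) + (\text{terms supported on exponents } j + k'N,\ 1 \le j \le D,\ k' \ge 1).$$
Since $N > D$, the three exponent sets $\{1,\dots,D\}$, $\{kN : k \ge 1\}$, and $\{j + k'N : 1 \le j \le D,\ k' \ge 1\}$ are pairwise disjoint, so no cancellation between the three groups can occur: the coefficient of $n^j$ for $1 \le j \le D$ is exactly $B_j(v)$, and the coefficient of $n^{kN}$ for $k \ge 1$ is exactly $A_k(v)$. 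Consequently $L(S') \supseteq L(S) + \mathrm{span}_\bR\{B_1(v), \ldots, B_D(v)\}$, and since $\gamma v - v = T(1)v - v = \sum_{j=1}^{D} B_j(v)$ lies in the second summand but, by choice of $\gamma$, not in $L(S)$, the inclusion is strict. The main obstacle is precisely this scale-separation bookkeeping: the substitution $n \mapsto n^N$ with $N > \deg_n T$ is what decouples the new contributions $B_j(v)$ at low orders from the old contributions $A_k(v)$ (now shifted to the pure $N$-scale), ensuring that each enlargement step genuinely adds at least one new direction and that the induction closes in at most $d$ steps.
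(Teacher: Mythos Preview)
Your proof is correct and takes a genuinely different route from the paper's. The paper works with the multi-parameter family $A_N = \{ s_N(t_N)\cdots s_1(t_1)v : t_i \in \bZ_{\ge 0}\}$ for the fixed cyclic word $s_N\cdots s_1$ (indices read mod $r$); a descending-chain argument on the space of affine functionals annihilating $A_N$ produces a single $N$ for which $A_N$ is already hyperplane-fleeing, and the resulting multivariable polynomial $s_N(t_N)\cdots s_1(t_1)v$ is then specialized to one variable by a one-shot substitution $t_i = t^{e_i}$ with rapidly growing exponents. You instead build the walk one dimension at a time: at each stage you select a specific $\gamma$ escaping the current affine span $v+L(S)$, realize it as $T(1)$ for an auxiliary walk $T$, and splice $T$ onto the previous walk using the scale-separation substitution $n\mapsto n^N$. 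Both arguments rest on the same two ideas---hyperplane-fleeing forces escape, and exponent separation prevents cancellation---but the paper packages them as ``multivariable first, specialize once'' while yours is an iterative single-variable construction. The paper's version is shorter and avoids the coefficient bookkeeping; yours is more explicitly constructive and makes transparent exactly which generator contributes each new direction. One minor remark: Lemma~\ref{lemma: operations on polynomial walks} as stated covers $S(n^\ell)$ but not the linear reparametrization $s_j(mn)$ that you use in defining $T$; this case is of course equally trivial (substituting $n\mapsto mn$ preserves polynomiality and $s_j(0)=\mathrm{Id}$), but it is worth saying so explicitly.
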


\begin{proof} Use cyclic notation to define $s_N = s_{N \text{ } \mathrm{mod} \text{ }r}$ for all positive integer $N$ and let $$A_N=\{ s_N(t_N) \cdots s_1(t_1) v \text{ } | \text{ } t_1, \ldots, t_N \in \mathbb{Z}_{\geq 0} \}.$$ Notice that $$\Gamma v = \bigcup_{N \geq 1} A_N$$ (this follows from $s_j(0)=\operatorname{Id}$) and $A_N \subset A_{N+1}$. Let $$V_N = \{ L:\bR^d \to \bR \text{ }| \text{ }L \text{ is affine such that } L(a)=0 \text{ for all } a \in A_N \}$$ denote the vector space of all affine maps (linear plus a constant) which annihilate $A_N$. Then $V_N \supset V_{N+1}$ and $\bigcap_{N \geq 1} V_N = \{0 \}$ is trivial since $\Gamma v$ is hyperplane-fleeing. So there exists a positive integer $N$ such that $V_N = \{0 \}$, which means that $A_N$ is hyperplane-fleeing. Hence the vector of polynomials $$(p_1(t_1, \ldots, t_N), \ldots, p_d(t_1, \ldots, t_N)) = s_N(t_N) \cdots s_1(t_1) v \in (\bZ[t_1, \ldots t_N])^d$$ satisfies the property that no non-trivial linear combination of its entries is constant. It is easy to construct a rapidly growing sequence $e_1<e_2< \ldots <e_N$ of positive integers such that the polynomials $$p_1(t^{e_1}, \ldots, t^{e_N}), \ldots, p_d(t^{e_1}, \ldots, t^{e_N}) \in \bZ[t]$$ also satisfy the property that no non-trivial linear combination of them is constant (one may take $e_k=R^k$ where $R$ is the largest power of some $t_i$ appearing in the polynomial vector above). This means that $$S(n)=s_N(n^{e_N}) \cdots s_1(n^{e_1})$$ satisfies the desired conclusion (it is a polynomial walk by Lemma~\ref{lemma: operations on polynomial walks}).  \end{proof}

\section{A uniform approximation on Ergodic averages along polynomial walks}

In this section, we study ergodic averages along hyperplane-fleeing polynomial walks. Our main technical result is a uniform estimate of their limits, which depends only on the rational spectrum.

%%\begin{prop} Suppose that $T: \bZ^d \curvearrowright (X,\mu)$ is a measure preserving system. Then for all $B \subset X$ with $\mu(B)>0$ there exists a positive integer $k=k(B)>0$ such that the following holds: For all polynomials $p_1(n), \ldots p_d(n) \in k \bZ[n]$ such that no non-trivial $\bR$-linear combination of them is constant, we have that $$\mu(B \cap T^{p(n)}B) > 0 \quad \text{ for infinitely many integers } n,$$ where $p(n)=(p_1(n), \ldots, p_d(n)).$ \end{prop}

\begin{mydef} We let $$\textbf{Rat}= \{ \chi \in \widehat{\bZ^d} \text{ }| \text{ The image of } \chi \text{ is finite.}  \}$$ denote the set of rational characters, where $\widehat{\bZ^d}$ denotes the group of characters on $\bZ^d$ (homomorphisms from $\bZ^d$ to the multiplicative group of unit complex numbers). If $T:\bZ^d \curvearrowright (X,\mu)$ is a measure preserving system and $\chi \in \widehat{\bZ^{d}}$ is a character then a $\chi$-eigenfunction of $(X,\mu,T)$ is a function $f \in L^2(X,\mu)$ such that $T^v f=\chi(v)f$ for all $v \in \bZ^d$. Furthermore, if $\chi \in \textbf{Rat}$ then we say that $f$ is a rational eigenfunction. We let $L^2_{\textbf{Rat}}(X,\mu,T)$ denote the rational Kronecker factor, i.e., the norm closed subspace spanned by rational eigenfunctions. We let $P_{\textbf{Rat}}: L^2 (X,\mu,T) \to L^2_{\textbf{Rat}}(X,\mu,T)$ denote the orthogonal projection onto the rational Kronecker factor.  \end{mydef}

\begin{prop}  Suppose that $T: \bZ^d \curvearrowright (X,\mu)$ is a measure preserving system. Let $f \in L^2(X,\mu)$ be orthogonal to the rational Kronecker factor of $(X,\mu,T)$. Then for all polynomials $p_1(n), \ldots p_d(n) \in \bZ[n]$ such that no non-trivial $\bR$-linear combination of them is constant we have that $$\lim_{N \to \infty}\left \| \frac{1}{N} \sum_{n=1}^N T^{p(n)} f \right \|_2 =0,$$ where $p(n)=(p_1(n), \ldots, p_d(n)).$ 

\end{prop}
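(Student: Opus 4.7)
My approach is to reduce this to a pointwise exponential-sum estimate via the spectral theorem for unitary $\bZ^d$-actions, and then apply Weyl's equidistribution theorem. Let $\sigma_f$ be the spectral measure of $f$ on $\widehat{\bZ^d} \cong \bT^d$, so that $\langle T^v f, f \rangle = \int_{\bT^d} e^{2\pi i \langle v, \alpha \rangle} \, d\sigma_f(\alpha)$ for every $v \in \bZ^d$. Expanding the squared $L^2$-norm of the average yields
\[
\left\| \frac{1}{N} \sum_{n=1}^N T^{p(n)} f \right\|_2^2 = \int_{\bT^d} |\Phi_N(\alpha)|^2 \, d\sigma_f(\alpha), \qquad \Phi_N(\alpha) := \frac{1}{N} \sum_{n=1}^N e^{2\pi i \langle p(n), \alpha \rangle}.
\]
Since rational eigenfunctions of $T$ correspond exactly to Dirac components of the spectral measure supported at torsion points of $\bT^d$, the assumption that $f$ is orthogonal to the rational Kronecker factor translates into $\sigma_f$ vanishing on $(\bQ/\bZ)^d$.

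The next step is to show that $\Phi_N(\alpha) \to 0$ for every non-torsion $\alpha \in \bT^d$. Fix such an $\alpha$ and consider the real polynomial $q_\alpha(n) := \langle p(n), \alpha \rangle = \sum_{j=1}^d \alpha_j p_j(n)$. Writing $p_j(n) = \sum_{k \geq 0} c_{j,k} n^k$ with $c_{j,k} \in \bZ$, the non-constant coefficients of $q_\alpha$ form the vector $M \alpha$, where $M$ is the integer matrix whose $(k,j)$-entry is $c_{j,k}$ for $k \geq 1$. The hypothesis that no non-trivial $\bR$-linear combination of $p_1, \ldots, p_d$ is constant is precisely the assertion that the columns of $M$ are linearly independent, so $M$ has rank $d$ and admits a left inverse with rational entries. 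Consequently, if every entry of $M\alpha$ were rational, $\alpha$ itself would be rational, contradicting the non-torsion assumption. Therefore some non-constant coefficient of $q_\alpha$ is irrational, and Weyl's equidistribution theorem for polynomial sequences gives $\Phi_N(\alpha) \to 0$.

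The argument concludes by the dominated convergence theorem: we have $|\Phi_N(\alpha)| \leq 1$ uniformly, and we have just shown that $\Phi_N \to 0$ pointwise off the $\sigma_f$-null set $(\bQ/\bZ)^d$; hence the integral above tends to zero, yielding the desired $L^2$ limit. I do not anticipate a substantial obstacle here, since the spectral reduction is standard and Weyl's theorem is classical. The only mildly technical ingredient is the short linear-algebra step used to upgrade the $\bR$-linear independence modulo constants into the statement that the integer matrix $M$ cannot send a non-rational vector to a rational one; this is where the hypothesis on the $p_j$ is used in full.
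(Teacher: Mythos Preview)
Your proposal is correct and follows essentially the same route as the paper: reduce via the spectral theorem to an integral of Weyl-type exponential sums against the spectral measure, use orthogonality to the rational Kronecker factor to discard the torsion points, apply Weyl's theorem pointwise, and conclude by dominated convergence. The only difference is that you spell out the linear-algebra step (the rank-$d$ matrix $M$ of non-constant coefficients and its rational left inverse) that the paper leaves implicit when it asserts $\langle p(n),\theta\rangle \notin \bR + \bQ[n]$ for $\theta$ irrational.
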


\begin{proof} By the spectral theorem, there exists a positive Borel measure $\sigma$ on $\bT^d$ such that $$\langle T^v f, f \rangle = \int_{\bT^d} e(\langle v, \theta \rangle ) d\sigma(\theta) \quad \text{for all }v \in \bZ^d. $$  Since $f \in L^2_{\textbf{Rat}}(X,\mu,T)^{\perp}$ we have that $\sigma ( \bQ^d/\bZ^d)=0$ and hence $$\left \|\frac{1}{N} \sum_{n=1}^N T^{p(n)} f \right \|^2_2 =\int_{\Omega} \left| \frac{1}{N} \sum_{n=1}^N e( \langle  p(n), \theta\rangle ) \right|^2 d\sigma(\theta)  $$ where $\Omega=\bT^d \setminus (\bQ^d /\bZ^d).$ But by Weyl's polynomial equidistribution theorem, we have that $$ \lim_{N\to \infty} \frac{1}{N} \sum_{n=1}^N e( \langle  p(n), \theta\rangle ) =0 \quad \text{for all } \theta \in \Omega$$ since $\langle p(n), \theta \rangle \notin \bR + \bQ[n]$, i.e., it is a polynomial in $n$ with at least one irrational non-constant term. The dominated convergence theorem now completes the proof. \end{proof}

\begin{prop} \label{prop: polynomial average approximates P_rat}   Suppose that $T: \bZ^d \curvearrowright (X,\mu)$ is a measure preserving system. Then for $f \in L^2(X,\mu)$ and $\epsilon>0$ there exists a positive integer $k=k(f,\epsilon)$ such that the following holds: For all polynomials $p_1(n), \ldots, p_d(n) \in \bZ[n]$ such that no non-trivial $\bR$-linear combination of them is constant, we have that the limit $$Q_{p} f := \lim_{N\to \infty} \frac{1}{N} \sum_{n=1}^N T^{p(n)} f  $$ exists in the $L^2(X,\mu)$ norm topology, where $p(n)=(p_1(n), \ldots, p_d(n))$, and if $p(\bZ) \subset k\bZ^d$ then $$\left \| Q_{p} f- P_{\textbf{Rat}}f \right \|_2 < \epsilon.$$

\end{prop}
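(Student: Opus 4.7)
The strategy is to decompose $f = P_{\textbf{Rat}} f + h$ with $h \in L^2_{\textbf{Rat}}(X,\mu,T)^{\perp}$, annihilate the polynomial average of $h$ using the preceding proposition, and control $P_{\textbf{Rat}} f$ through a finite approximation by rational eigenfunctions whose characters are all trivial on a suitable sublattice $k\bZ^d$.

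First I would establish existence of the limit $Q_p f$ unconditionally on $k$. The averaging operators $A_N := \frac{1}{N}\sum_{n=1}^N T^{p(n)}$ are contractions on $L^2(X,\mu)$, so the set of $f$ for which $A_N f$ converges in $L^2$-norm is closed, and it suffices to check convergence on a dense subset. On $L^2_{\textbf{Rat}}(X,\mu,T)^{\perp}$ the preceding proposition gives $A_N f \to 0$. For a rational eigenfunction $f_\chi$ with $\chi(v) = e(\langle v, \theta\rangle)$ for some $\theta \in \bQ^d/\bZ^d$, one has $A_N f_\chi = \bigl(\frac{1}{N}\sum_{n=1}^N e(\langle p(n),\theta\rangle)\bigr) f_\chi$; since $\langle p(n), \theta\rangle$ is a polynomial in $n$ with rational coefficients, the sequence $n \mapsto e(\langle p(n), \theta\rangle)$ is eventually periodic and its Cesàro average converges. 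Density of linear combinations of rational eigenfunctions in $L^2_{\textbf{Rat}}$ then yields existence of $Q_p f$ on all of $L^2$.

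For the approximation bound, given $\epsilon > 0$, I would choose a finite linear combination $g = \sum_{i=1}^M c_i f_{\chi_i}$ of rational eigenfunctions with $\|P_{\textbf{Rat}} f - g\|_2 < \epsilon/2$, and define $k = k(f,\epsilon)$ to be the least common multiple of positive integers $k_1,\ldots,k_M$ such that each $\chi_i$ is trivial on $k_i \bZ^d$; these exist because each $\chi_i$ has finite image. Whenever $p(\bZ) \subset k\bZ^d$, each factor $\chi_i(p(n))$ equals $1$, so $T^{p(n)} g = g$ and hence $Q_p g = g$. Writing $Q_p f = Q_p(P_{\textbf{Rat}} f - g) + Q_p g + Q_p h = Q_p(P_{\textbf{Rat}} f - g) + g + 0$ and using contractivity of each $A_N$ gives $\|Q_p f - P_{\textbf{Rat}} f\|_2 \le 2 \|P_{\textbf{Rat}} f - g\|_2 < \epsilon$.

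There is no serious obstacle in the argument; it is essentially bookkeeping once one has the preceding proposition and the spectral description of the rational Kronecker factor. The key structural point is that $k$ needs only be adapted to the finitely many characters appearing in a single chosen approximation of $P_{\textbf{Rat}} f$ by rational eigenfunctions, and this choice works uniformly over all admissible polynomial walks $p$ whose image lies in $k\bZ^d$.
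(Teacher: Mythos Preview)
Your proof is correct and follows essentially the same approach as the paper: decompose $f$ into its rational Kronecker part and orthogonal complement, kill the latter via the preceding proposition, and handle the former via finitely many rational eigenfunctions whose characters are trivialized on a common sublattice $k\bZ^d$. The only cosmetic difference is that the paper uses the full orthogonal eigenfunction expansion $P_{\textbf{Rat}}f = \sum_{\chi} c_\chi h_\chi$ and bounds the tail, whereas you first pick a finite $\epsilon/2$-approximation $g$ and use contractivity plus the triangle inequality; these are equivalent packagings of the same argument.
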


\begin{proof} Decompose $f=h+h^{\perp}$ where $h$ is in the rational Kronecker factor and $h^{\perp}$ is orthogonal to the rational Kronecker factor. We already know that the limit $Q_p h^{\perp}$ exists and equals $0$. Now write $$h= \sum_{\chi \in \textbf{Rat}} c_{\chi} h_{\chi}$$ where $h_{\chi}$ is a $\chi$ eigenfunction (meaning that $T^v f=\chi(v) f$ for all $v \in \bZ^d$) with $\|h_{\chi}\|_2=1$. Now if $\chi$ is rational then there exists a positive integer $k_{\chi}$ such that $\chi( v )=1$ for all $v \in k_{\chi} \bZ^d$, which means that the sequence $T^{p(n)}h_{\chi}$ is $k_{\chi}$ periodic and so we have the existence of the limit $$Q_p h_{\chi} :=\lim_{N\to \infty} \frac{1}{N} \sum_{n=1}^N T^{p(n)} h_{\chi} = \frac{1}{k_{\chi}} \sum_{n=1}^{k_{\chi}} \chi(p(n)) h_{\chi}.$$ The existence of the limit $Q_p f$ now immediately follows from a basic approximation argument. From this expression it also follows that if $p(\bZ) \subset k_{\chi}\bZ^d$ then $$Q_p h_{\chi}=h_{\chi}.$$ This means that if $k$ is a positive integer such that $p(\bZ) \subset k\bZ^d$, we will have $$ \| Q_p f - h\|_2 = \| Q_p h - h\|_2 \leq 2 \sqrt{ \underset{ \{ \chi \text{ } : \text{ } k \not \equiv 0 \text{ }\mathrm{mod}\text{ } k_{\chi} \} } \sum c^2_{\chi} }.  $$ But we may choose a highly divisible enough positive integer $k$ such that this upper bound is less than $\epsilon$. Notice that $k$ depends only on $\epsilon$ and the coefficients $c_{\chi}$ and is uniform across all such polynomial sequences $p(n)$.

\end{proof}

\section{Proof of the main recurrence result}

We are now in a position to prove Theorem~\ref{thm: non-linear twisted-multiple recurrence intro}, which we restate verbatim for the convenience of the reader as follows.

\begin{thm*} Let $\Gamma \subset \{ \bZ^d \to \bZ^d \}$ be a semigroup of maps and suppose that there exists a finite set $\{s_1, \ldots, s_r \}$ of polynomial walks in $\Gamma$ such that $ \{ s_j(n) \text{ } | \text{ } n \geq 0 \text{ and } j=1, \ldots, r \}$ generates $\Gamma$. Suppose that $T: \bZ^d \curvearrowright (X,\mu)$ is a measure preserving system. Then for all $B \subset X$ with $\mu(B)>0$ and positive integers $m$ there exists a positive integer $k=k(B,m,\epsilon)>0$ such that the following holds: If $v_1, \ldots, v_m \in k\bZ^d$ satisfy the property that each orbit $\Gamma v_i$ is hyperplane-fleeing then there exist $\gamma_1, \ldots, \gamma_m \in \Gamma$ such that $$\mu(B \cap T^{-\gamma_1 v_1}B \cap \cdots \cap T^{-\gamma_m v_m}B) > \mu(B)^{m+1}-\epsilon.$$

\end{thm*}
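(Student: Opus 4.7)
The plan is to reduce the statement to an $L^2$ multiple ergodic average, using Theorem~\ref{thm: hyperplane fleeing polynomial walks} to produce hyperplane-fleeing polynomial walks inside $\Gamma$ along each orbit $\Gamma v_i$, and then controlling these averages via Proposition~\ref{prop: polynomial average approximates P_rat}. Setting $f = \mathbf{1}_B$, I first apply Proposition~\ref{prop: polynomial average approximates P_rat} to $f$ with tolerance $\epsilon/(2m)$: this produces a positive integer $k$, depending only on $f$ and $\epsilon$, such that for every polynomial vector $p(n) \in \bZ[n]^d$ whose components and $1$ are $\bR$-linearly independent and which satisfies $p(\bZ) \subset k\bZ^d$, the limit $Q_p f := \lim_N \frac{1}{N}\sum_{n=1}^N T^{p(n)} f$ exists in $L^2$ and satisfies $\|Q_p f - P_{\textbf{Rat}} f\|_2 < \epsilon/(2m)$. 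I claim this $k$ works in the conclusion of the theorem.

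Given any $v_1, \ldots, v_m \in k\bZ^d$ with each $\Gamma v_i$ hyperplane-fleeing, apply Theorem~\ref{thm: hyperplane fleeing polynomial walks} to obtain polynomial walks $S_i : \bZ_{\geq 0} \to \Gamma$ such that $p^{(i)}(n) := S_i(n) v_i$ is hyperplane-fleeing. Replace $S_i$ by the rescaled walk $\widetilde{S}_i(n) := S_i(kn)$, which remains a polynomial walk in $\Gamma$; by Lemma~\ref{lemma: operations on polynomial walks}(3) we have $\widetilde{S}_i(n) v_i \in k\bZ^d$ for all $n \geq 0$, and substituting $n \mapsto kn$ preserves $\bR$-linear independence of the coordinate polynomials together with $1$, so $\widetilde{p}^{(i)}(n) := \widetilde{S}_i(n)v_i$ is still hyperplane-fleeing.

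The key computation is the multivariable average
\begin{equation*}
A_N := \frac{1}{N^m} \sum_{n_1, \ldots, n_m = 1}^N \mu\Bigl(B \cap \bigcap_{i=1}^m T^{-\widetilde{S}_i(n_i) v_i} B\Bigr) = \int_X f \cdot \prod_{i=1}^m \Bigl( \frac{1}{N} \sum_{n_i=1}^N T^{\widetilde{S}_i(n_i) v_i} f \Bigr)\, d\mu.
\end{equation*}
By Proposition~\ref{prop: polynomial average approximates P_rat} applied to each $\widetilde{p}^{(i)}$, the $i$-th inner factor converges in $L^2$ to some $Q_i f$ with $\|Q_i f - P_{\textbf{Rat}} f\|_2 < \epsilon/(2m)$; since all factors are bounded by $1$ in $L^{\infty}$, their product converges in $L^1$, whence $A_N \to \int f \prod_{i=1}^m Q_i f \, d\mu$. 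A standard telescoping estimate (swapping $Q_i f$ for $P_{\textbf{Rat}} f$ one at a time, each swap costing at most $\epsilon/(2m)$ in $L^1$) shows that this limit differs from $\int f (P_{\textbf{Rat}} f)^m d\mu$ by at most $\epsilon/2$. Since the rational Kronecker factor is the $L^2$-space of a sub-$\sigma$-algebra and $P_{\textbf{Rat}} f$ is the bounded conditional expectation of an indicator, $(P_{\textbf{Rat}} f)^m$ lies in the same factor, so
\begin{equation*}
\int f (P_{\textbf{Rat}} f)^m d\mu = \int (P_{\textbf{Rat}} f)^{m+1} d\mu \geq \Bigl( \int P_{\textbf{Rat}} f \, d\mu \Bigr)^{m+1} = \mu(B)^{m+1}
\end{equation*}
by Jensen's inequality. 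Hence $\liminf_N A_N \geq \mu(B)^{m+1} - \epsilon/2$, and pigeonholing over $(n_1,\ldots,n_m)$ yields an $m$-tuple with the intersection exceeding $\mu(B)^{m+1} - \epsilon$; taking $\gamma_i := \widetilde{S}_i(n_i) \in \Gamma$ completes the proof.

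The main conceptual point, which is also the hardest part to see up front, is that the integer $k$ must be uniform over all admissible $(v_1, \ldots, v_m)$; this is precisely what Proposition~\ref{prop: polynomial average approximates P_rat} provides, because its $k$ depends only on $f$ and $\epsilon$, not on the particular hyperplane-fleeing polynomial sequence. The remaining technicalities are: verifying that $\widetilde{S}_i$ is a polynomial walk preserving hyperplane-fleeing (immediate from the definition and Lemma~\ref{lemma: operations on polynomial walks}), and using that bounded functions in the rational Kronecker factor form an algebra (standard, since this factor is $L^2$ of a $\sigma$-algebra).
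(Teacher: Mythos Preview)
Your proof is correct and follows essentially the same route as the paper's: apply Proposition~\ref{prop: polynomial average approximates P_rat} to $f=\mathds{1}_B$ to fix $k$, use Theorem~\ref{thm: hyperplane fleeing polynomial walks} to build hyperplane-fleeing polynomial walks through each $v_i$, rescale via Lemma~\ref{lemma: operations on polynomial walks}(\ref{item: scaling polynomial walk}) to land in $k\bZ^d$, estimate the resulting multiple ergodic average by telescoping each $Q_i f$ to $P_{\textbf{Rat}} f$, and finish with Jensen on the rational Kronecker factor. The only differences from the paper are cosmetic: you use tolerance $\epsilon/(2m)$ rather than $\epsilon/m$ and a single diagonal limit $N\to\infty$ rather than iterated limits $N_1\to\infty,\ldots,N_m\to\infty$, neither of which affects the argument.
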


\begin{proof} Apply Proposition~\ref{prop: polynomial average approximates P_rat} with $f=\mathds{1}_B$ and let $k=k(f, \frac{\epsilon}{m})>0$ be as in the corresponding conclusion of that proposition. We apply Theorem~\ref{thm: hyperplane fleeing polynomial walks} to each $v_i$ to obtain polynomial walks $R_i: \bZ_{\geq 0} \to \Gamma$ such that each sequence $R_i(n)v_i$ is hyperplane-fleeing. Note the crucial observation that $v_i \in k\bZ^d$ implies that the sequence $P_i(n) := R_i(k n)v_i$ satisfies $P_i(n) \in k \bZ^d$ for all $n \in \bZ_{\geq 0}$ (See Lemma~\ref{lemma: operations on polynomial walks}) and is also hyperplane-fleeing (if a polynomial sequence enters a hyperplane infinitely many times, it is always there). Hence we may indeed apply Proposition~\ref{prop: polynomial average approximates P_rat} with $p(n)=P_i(n)$ to get that \begin{align}\label{proof estimate: P_rat close to Q_{P_i}} \left \|\lim_{N\to \infty} \frac{1}{N} \sum_{n=1}^N T^{P_i(n)}\mathds{1}_B - P_{\textbf{Rat}}\mathds{1}_B \right \|_2 <\frac{\epsilon}{m}. \end{align} 

Now consider the following average of correlations \begin{align*} C(N_1, \ldots, N_m) &= \frac{1}{N_1 \cdots N_m} \sum_{n_1=1}^{N_1} \cdots \sum_{n_m=1}^{N_m} \mu(B \cap T^{-P_1(n_1)}B \cap \cdots \cap T^{-P_m(n_m)}B) \\ &=  \frac{1}{N_1 \cdots N_m} \sum_{n_1=1}^{N_1} \cdots \sum_{n_m=1}^{N_m} \int_X \mathds{1}_B \left(T^{P_1(n_1)} \mathds{1}_B \right) \cdots \left( T^{P_m(n_m)}\mathds{1}_B \right) d\mu  \end{align*} 

and notice that $$\lim_{N_1 \to \infty} \cdots \lim_{N_m \to \infty} C(N_1, \ldots, N_m) = \int_X \mathds{1}_B (Q_{P_1} \mathds{1}_B) \cdots  (Q_{P_m} \mathds{1}_B) d\mu $$  where $P_i(n)=R_i(n)v_i$ and $Q_{P_i}$ is as defined in the statement of Proposition~\ref{prop: polynomial average approximates P_rat}. But now applying the estimates (\ref{proof estimate: P_rat close to Q_{P_i}}) iteratively for $i=1, \ldots, m$ together with the fact that $\| P_{\textbf{Rat}}\mathds{1}_{B} \|_{\infty} \leq 1$ (this follows from the well known fact $P_{\textbf{Rat}}$ is a conditional expectation) we get that $$ \int_X \mathds{1}_B (Q_{P_1} \mathds{1}_B) \cdots  (Q_{P_m} \mathds{1}_B) d\mu > \int_X \mathds{1}_B (P_{\textbf{Rat}}\mathds{1}_B)^m d\mu - \epsilon.$$ 

Now we use the fact that $(P_{\textbf{Rat}}\mathds{1}_B)^m \in L^2_{\textbf{Rat}}(X,\mu,T)$ (since $L^2_{\textbf{Rat}}(X,\mu,T) \cap L^{\infty}(X,\mu)$ is an algebra) and Jensen's inequality to get  \begin{align*}\int_X \mathds{1}_B (P_{\textbf{Rat}}\mathds{1}_B)^m d\mu & = \langle \mathds{1}_B ,(P_{\textbf{Rat}}\mathds{1}_B)^m \rangle \\  & \geq  \langle P_{\textbf{Rat}}\mathds{1}_B ,P_{\textbf{Rat}} (P_{\textbf{Rat}}\mathds{1}_B)^m \rangle \\ & =  \langle P_{\textbf{Rat}}\mathds{1}_B , (P_{\textbf{Rat}}\mathds{1}_B)^m \rangle \\ & \geq \left( \int P_{\textbf{rat}}\mathds{1}_B d\mu \right)^{m+1} \\ &= \left( \int \mathds{1}_B d\mu \right)^{m+1}.    \end{align*} This finishes the proof, as we have shown that $$\lim_{N_1 \to \infty} \cdots \lim_{N_m \to \infty} C(N_1, \ldots, N_m) > \mu(B)^{m+1} - \epsilon. $$     \end{proof}

\section{Proofs of Theorems \ref{thm: xy-P(z) is Magyar} and \ref{Bogolubov}}

\textit{Proof of Theorem ~\ref{thm: xy-P(z) is Magyar}.} Let $P(z) \in \bZ[z]$ be a polynomial of degree $\geq 2$ with $P(0)=0$. The polynomial $F(x,y,z)=xy-P(z)$ has the following symmetries which are polynomial walks:

$$ S_1(n) (x,y,z)=(x,y + H(n,x,z), z+nx) $$ and $$S_2(n) (x,y,z) = (x+H(n,y,z),y ,z+ny)$$ where $$H(n,x,z) = \frac{P(z+nx)-P(z)}{x} \in \bZ[n,x,z]$$ is a polynomial. Observe that, as a polynomial in $n$ (with coefficients in $\bZ[x,z]$) the degree of $H(n,x,z)$ is the degree of $P$ (at least $2$), with leading term of the form $Cx^{\deg P-1}n^{\deg P}$ for some integer $C \neq 0$. Let $\Gamma$ be the semigroup generated by these polynomial walks.

\textbf{Claim:} For all $v_0=(x_0,y_0,z_0) \in \bZ^3$ with $x_0 \neq 0$, we have that $\Gamma v$ is hyperplane-fleeing.

We may assume, without loss of generality, that $y_0 \neq 0$ since we may find $n \in \bN$ such that $v_0'=S_1(n)v_0=(x_0, y'_0, z'_0)$ where $y'_0 \neq 0$ (since, as observed above, $H(n,x_0,z_0)$ is non-constant when $x_0 \neq 0$). Suppose firstly that the sequence $S_1(n)(v_0)$ is contained in a hyperplane $ax+by+cz=d$. As observed earlier, $H(n,x_0,z_0)$ has leading term $Cx_0^{\deg P-1}n^{\deg P}$ and hence since $x_0 \neq 0$, it has degree $\deg P \geq 2$. From this we conclude that $b=0$. But since $z_0+nx_0$ is non-constant, we conclude that $c=0$. Thus we have shown that if $x_0 \neq 0$, we have that $S_1(n)(x_0,y_0,z_0)$ can only be contained in the hyperplane $x=x_0$. Finally, by symmetry, we have that $S_2(n)(v_0)$ can only be contained in the hyperplane $y=y_0$ (in particular, not $x=x_0$). This means that $\Gamma v_0$ is hyperplane-fleeing. 

Using this claim and Theorem~\ref{thm: non-linear twisted patterns intro}, we reduce Theorem~\ref{thm: xy-P(z) is Magyar} to the statement that for all integers $k\geq 1$, the set $F\left(k \bZ^3 \setminus \left( \{0 \} \times \bZ^2 \right ) \right)$ contains $q\bZ$ for some integer $q \geq 1$. But $F(k,ka,0)=k^2a$ and so we may take $q=k^2$.

\qed

\textit{Proof of Theorem~\ref{Bogolubov}.} Let $F(x,y)=x-P(y)$ where $P(y) \in \bZ[y]$ has degree at least $2$ with $P(0)=0$. Notice that $F$ is preserved by the polynomial walk $S(n)(x,y)=(x+P(y+n) -  P(y), y+n)$ for $n \in \bZ_{\geq 0}$. Moreover, for all $v_0=(x_0,y_0) \in \bZ^2$, we have that $S(n)v_0$ is hyperplane-fleeing since $x_0+P(y_0+n)-P(y_0)$ is non-linear in $n$. Consequently, Theorem~\ref{thm: non-linear twisted patterns intro} is applicable to the group generated by this polynomial walk. 

\qed

\appendix

\section{Basic algebra} \label{appendix: irreducibility zariski dense}

\begin{lemma} Let $\Gamma \leq G \leq \operatorname{GL}_n(\bR)$ be groups such that $G$ is the Zariski closure of $\Gamma$. Suppose that $\rho:G \to \operatorname{GL}_d(\bR)$ is an irreducible representation such that $\rho$ is a polynomial map. Then the restriction $\rho |_{\Gamma}: \Gamma \to \operatorname{GL}_d(\bR)$ is also irreducible. 
\end{lemma}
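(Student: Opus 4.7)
The plan is to argue by contradiction, exploiting the defining property of Zariski closure: a Zariski closed subset of $G$ that contains $\Gamma$ must contain all of $G$. So suppose $\rho|_\Gamma$ fails to be irreducible, meaning there is a proper nonzero subspace $W \subsetneq \bR^d$ which is $\rho(\Gamma)$-invariant, i.e.\ $\rho(\gamma)W \subseteq W$ for every $\gamma \in \Gamma$. The goal is to upgrade this $\Gamma$-invariance to $G$-invariance, which will contradict the assumed irreducibility of $\rho:G \to \operatorname{GL}_d(\bR)$.

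The key step is to introduce the stabilizer semigroup
\[
H \; = \; \{ g \in G : \rho(g)W \subseteq W \}
\]
and show it is Zariski closed in $G$. Fix a basis $w_1,\ldots,w_k$ of $W$ and extend to a basis of $\bR^d$; with respect to this basis, the condition $\rho(g)w_i \in W$ amounts to requiring that the last $d-k$ coordinates of $\rho(g)w_i$ vanish. Since the matrix entries of $\rho(g)$ are, by hypothesis, polynomial functions of the entries of $g$, each of these coordinates is a polynomial function on $\operatorname{GL}_n(\bR)$, and $H$ is the common zero set of the resulting finite collection of polynomials intersected with $G$. Hence $H$ is Zariski closed in $G$.

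By construction $\Gamma \subseteq H$, so taking Zariski closures inside $G$ yields $G = \overline{\Gamma}^{\mathrm{Zar}} \subseteq \overline{H}^{\mathrm{Zar}} = H$. Thus $\rho(g)W \subseteq W$ for every $g \in G$, so $W$ is a proper nonzero $\rho(G)$-invariant subspace, contradicting the irreducibility of $\rho$. The only nontrivial ingredient is the observation that a polynomial representation pulls back the closed condition ``preserves $W$'' to a Zariski closed condition on $G$; once that is in place, the result is immediate from the definition of Zariski closure. I do not anticipate any real obstacle here, since no assumption on the structure of $G$ (semisimplicity, connectedness, etc.) is needed — only that $\rho$ is given by polynomials in the matrix entries.
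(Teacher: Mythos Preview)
Your proof is correct and follows essentially the same approach as the paper's: both arguments express the invariance condition $\rho(g)W\subseteq W$ as the vanishing of finitely many polynomials in the entries of $g$ (using that $\rho$ is polynomial), and then use Zariski density of $\Gamma$ in $G$ to pass from $\Gamma$-invariance to $G$-invariance. The only cosmetic difference is that the paper works with a single nonzero $w\in W$ and the quotient map $\pi:\bR^d\to\bR^d/W$, while you package the same equations via the stabilizer $H=\{g\in G:\rho(g)W\subseteq W\}$ and its Zariski-closedness.
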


\begin{proof} Suppose on the contrary that the restriction is reducible. This means that there exists a proper linear subspace $W \leq \bR^d$ and $w \in W$ such that $\rho(\Gamma) w \subset W$. Let $\pi:\bR^d \to \bR^d/W$ denote the quotient map. Then $P:G \to \operatorname{GL}_d(\bR)$ given by $P(g)=\pi (\rho(g) w)$ is a polynomial in $g$ which vanishes for all $g \in \Gamma$. Since $G$ is the Zariski closure of $\Gamma$, we get that $P$ also vanishes on $G$ and hence $\rho(G)w \subset W$, which contradicts the irreducibility of $\rho$.  \end{proof}

\begin{lemma} Let $a,b \in \bZ \setminus \{0\}$ be non-negative integers. Then the subgroup $$\Gamma_0 = \left \langle \begin{pmatrix}1 & a \\ 0 & 1 \end{pmatrix}, \begin{pmatrix}1 & 0 \\ b & 1 \end{pmatrix} \right \rangle$$ is Zariski dense in $\operatorname{SL}_2(\bR)$. 

\end{lemma}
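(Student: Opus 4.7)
The plan is to let $H$ denote the Zariski closure of $\Gamma_0$ inside $\mathrm{SL}_2(\mathbb{R})$ (viewed as a real affine algebraic variety cut out by $\det = 1$), and to show that $H = \mathrm{SL}_2(\mathbb{R})$ by producing enough one-parameter subgroups inside $H$.

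\textbf{Step 1: $H$ contains the full upper and lower unipotent subgroups.} Let $u^+ = \begin{pmatrix} 1 & a \\ 0 & 1 \end{pmatrix}$, so $u^+_n := (u^+)^n = \begin{pmatrix} 1 & na \\ 0 & 1 \end{pmatrix}$ lies in $\Gamma_0$ for every $n \in \mathbb{Z}$. For any polynomial $f$ on $\mathrm{M}_2(\mathbb{R})$ that vanishes on $\Gamma_0$, the restriction $n \mapsto f(u^+_n)$ is a polynomial in $n$ that vanishes on the infinite set $\mathbb{Z}$, hence vanishes identically in $n \in \mathbb{R}$. Consequently $f$ vanishes on the entire one-parameter subgroup $U^+ = \left\{ \begin{pmatrix} 1 & t \\ 0 & 1 \end{pmatrix} : t \in \mathbb{R} \right\}$, so $U^+ \subset H$. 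The analogous argument with $\begin{pmatrix} 1 & 0 \\ b & 1 \end{pmatrix}$ and $b \neq 0$ gives $U^- \subset H$.

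\textbf{Step 2: Passing to Lie algebras.} The Zariski closure $H$ is a real algebraic subgroup of $\mathrm{SL}_2(\mathbb{R})$, in particular a Lie subgroup, so it has a Lie algebra $\mathfrak{h} \subset \mathfrak{sl}_2(\mathbb{R})$. Containing $U^+$ and $U^-$ forces $\mathfrak{h}$ to contain both $E = \begin{pmatrix} 0 & 1 \\ 0 & 0 \end{pmatrix}$ and $F = \begin{pmatrix} 0 & 0 \\ 1 & 0 \end{pmatrix}$. Since $[E,F] = \begin{pmatrix} 1 & 0 \\ 0 & -1 \end{pmatrix}$, the set $\{E, F, [E,F]\}$ spans all of $\mathfrak{sl}_2(\mathbb{R})$, so $\mathfrak{h} = \mathfrak{sl}_2(\mathbb{R})$. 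Therefore the identity component $H^{\circ}$ (in the Hausdorff topology) coincides with $\mathrm{SL}_2(\mathbb{R})$, which is connected in the Hausdorff topology. Hence $H \supset \mathrm{SL}_2(\mathbb{R})$, and together with $H \leq \mathrm{SL}_2(\mathbb{R})$ we conclude $H = \mathrm{SL}_2(\mathbb{R})$.

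The only delicate point is Step~1 — the transition from the discrete orbit $\{u^+_n : n \in \mathbb{Z}\}$ to the whole continuous subgroup $U^+$ — but this reduces to the trivial remark that a one-variable polynomial with infinitely many zeros is zero. Everything else (the Lie-algebra generation of $\mathfrak{sl}_2$ by two nilpotents, and the Hausdorff-connectedness of $\mathrm{SL}_2(\mathbb{R})$) is standard. I expect no serious obstacles.
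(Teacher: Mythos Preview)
Your proof is correct. Step~1 is essentially identical to the paper's argument: both observe that any polynomial vanishing on $\Gamma_0$ restricts to a one-variable polynomial with infinitely many zeros along the orbit $\{(u^+)^n\}$, hence vanishes on all of $U^+$, and similarly for $U^-$.

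The only difference is in how you conclude. The paper simply notes that $U^+$ and $U^-$ together generate $\operatorname{SL}_2(\bR)$ as an abstract group (an elementary fact, e.g.\ via row reduction), and since the Zariski closure $H$ is a subgroup containing both, it must equal $\operatorname{SL}_2(\bR)$. You instead pass to Lie algebras: $E,F \in \mathfrak{h}$ forces $\mathfrak{h} = \mathfrak{sl}_2(\bR)$, and then connectedness of $\operatorname{SL}_2(\bR)$ gives $H = \operatorname{SL}_2(\bR)$. Both routes are standard; the paper's is marginally more elementary (it avoids invoking that real algebraic groups are Lie groups and the Lie correspondence), while yours would adapt more readily to situations where explicit group-theoretic generation is less obvious.
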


\begin{proof} Let $$U(t) = \begin{pmatrix}1 & t \\ 0 & 1 \end{pmatrix}.$$ We wish to show that the Zariski closure of $\Gamma_0$ contains $U(t)$ and its transpose, for all $t \in \bR$, as these generate $\operatorname{SL}_2(\bR)$. Now suppose that $P:\operatorname{SL}_2(\bR) \to \bR$ is a polynomial map which vanishes on all of $\Gamma_0$. Then, in particular, the polynomial $R: \bR \to \bR$ given by $R(x)=P(U(x))$ vanishes on the infinite set $a\bZ$, and so $R(x)$ is the zero polynomial. Hence $P$ vanishes on $U(t)$, for all $t \in \bR$. This shows that $U(t)$ is in the Zariski closure, and a similair argument applies to its transpose.  \end{proof}

\begin{eg} The adjoint representation $\operatorname{Ad}:\operatorname{SL}_d(\bR) \to \operatorname{GL}(\mathfrak{sl}_d(\bR))$ is a polynomial map. It is an irreducible representation and hence the above lemmata may be applied to verify the claims in Example~\ref{eg: x^2 +y^2 - z^2 intro}. \end{eg}

\begin{lemma}\label{Gluing irreducible reps} Suppose that $V = W_0 \oplus W \oplus W_1$ is a direct sum of linear spaces, with $W \neq \{0 \}$, and let $V_0 =W_0+W$ and $V_1=W+W_1$. Let $\Gamma_1, \Gamma_2 \leq \operatorname{GL}(V)$ be groups such that for $i=0,1$ we have that 

\begin{enumerate} 
	\item $\Gamma_iV_i=V_i$ and the representation $\Gamma_i \curvearrowright V_i$ is irreducible.
	\item $\Gamma_i$ acts trivially on $W_{1-i}$ (i.e., $\gamma v=v$ for all $\gamma \in \Gamma_i$ and $v \in W_{1-i}$). \end{enumerate}
\end{lemma}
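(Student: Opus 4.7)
The conclusion of the lemma (which appears to be truncated in the excerpt) must be that the subgroup $\langle \Gamma_0, \Gamma_1 \rangle \leq \operatorname{GL}(V)$ acts irreducibly on $V$; this is the only natural ``gluing'' statement consistent with the label, the hypotheses, and the use made in Example~\ref{eg: x^2 +y^2 - z^2 intro} and Proposition~\ref{prop: higher dimension}. The strategy is to fix an arbitrary nonzero $\langle \Gamma_0, \Gamma_1 \rangle$-invariant subspace $U \subseteq V$ and show $U = V$.

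The key observation is that each $\Gamma_i$ shifts every vector by an element of $V_i$. Indeed, for $\gamma \in \Gamma_0$ and $u = v_0 + w_1$ with $v_0 \in V_0,\ w_1 \in W_1$, the hypothesis that $\Gamma_0$ fixes $W_1$ pointwise gives $(\gamma - 1)u = \gamma v_0 - v_0 \in V_0$, and symmetrically $(\gamma-1)u \in V_1$ for $\gamma \in \Gamma_1$. Picking $u \in U \setminus \{0\}$, set
\[
L_0 := \mathrm{span}\bigl\{(\gamma - 1)u : \gamma \in \Gamma_0\bigr\} \subseteq V_0 \cap U.
\]
The identity $\gamma'(\gamma - 1)u = (\gamma'\gamma - 1)u - (\gamma' - 1)u$ shows $L_0$ is $\Gamma_0$-invariant, so by irreducibility of $\Gamma_0 \curvearrowright V_0$ either $L_0 = V_0$ or $L_0 = 0$; the latter means $u$ is $\Gamma_0$-fixed. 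Define $L_1$ analogously.

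The three cases now fall out cleanly. If $L_0 = V_0$, then $V_0 \subseteq U$, so $W \subseteq U \cap V_1$; but $U \cap V_1$ is a nonzero $\Gamma_1$-invariant subspace of $V_1$, hence equals $V_1$ by irreducibility, and so $U \supseteq V_0 + V_1 = V$. The case $L_1 = V_1$ is symmetric. The remaining possibility $L_0 = L_1 = 0$ forces $u$ to be fixed by every element of $\Gamma_0 \cup \Gamma_1$; writing $u = w_0 + w + w_1$, $\Gamma_0$-fixedness says $w_0 + w$ is a $\Gamma_0$-fixed vector of $V_0$, which must be $0$ by irreducibility (assuming $\dim V_0 \geq 2$), and symmetrically $w + w_1 = 0$, giving $u = 0$, a contradiction.

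\textbf{Main obstacle.} The only genuine issue is the degenerate case $\dim V_i = 1$: combined with $W \neq 0$ and $W \subseteq V_i$, this forces $W_{1-i} = 0$, so $V = V_i$ and $\Gamma_i$ alone already acts irreducibly on $V$ by hypothesis. So this degeneracy is handled by a one-line remark at the start of the proof. Past that, the argument is standard invariant-subspace bookkeeping --- the only slightly non-trivial ingredient is recognizing that the $(\gamma - 1)u$ span lands inside $V_i$ and is $\Gamma_i$-stable, so that irreducibility of each constituent representation can be invoked.
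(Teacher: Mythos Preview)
Your proof is correct, and you correctly reconstructed the intended conclusion: $\langle \Gamma_0,\Gamma_1\rangle$ acts irreducibly on $V$.

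The paper argues differently. Rather than splitting into cases according to whether the span $L_i$ of the $(\gamma-1)u$ is all of $V_i$ or zero, the paper works with the group algebras $A_i=\bF[\Gamma_i]$ and $A=\bF[\Gamma]$. Starting from any nonzero $v$, it first observes that some $A_i v$ must contain a vector $v'=w'_0+w'+w'_1$ with $w'\neq 0$ (this uses irreducibility of one $\Gamma_i$ together with $W\neq 0$). Once such a $v'$ is in hand, irreducibility gives $A_0 v' = V_0 + \bF w'_1$ and $A_1 v' = \bF w'_0 + V_1$, and summing yields $Av \supset V_0+V_1=V$. So the paper's key manoeuvre is to pass to a vector with nonzero $W$-component, after which both pieces are generated in one stroke; your key manoeuvre is the observation that $(\gamma-1)u\in V_i$ and that its span is $\Gamma_i$-stable, which then forces a trichotomy. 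Your route is a bit more elementary (no group-algebra language) at the cost of an extra case and the degenerate $\dim V_i=1$ remark; the paper's route is shorter once the $W$-component reduction is made, and avoids the fixed-vector case entirely.
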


Then the action of $\Gamma=\langle \Gamma_0 \cup \Gamma_1 \rangle$ on $V$ is irreducible.

\begin{proof} It will be more convenient to work with the group algebras $A_i=\bF[\Gamma_i]$ and $A= \bF[\Gamma]$. Let $v \in V$ be non-zero. We wish to show that $Av = V_0+V_1$. Since $v \neq 0$ we get that either $A_1v$ or $A_2v$ contains a vector with $W$ component non-zero (from the irreduciblity of $\Gamma_1$ and $\Gamma_2$). Let $v' \in Av$ be such a vector and write $v'=w'_0+w'+w'_1$ where $w' \in W$, $w'_0 \in W_0$ and $w'_1 \in W_1$. As $w' \neq 0$, the hypothesis on $\Gamma_i$ gives us that $A_iw'=V_i$ but $A_i'w_{1-i}=\{w_{1-i}\}$ for $i=0,1$. This means that $A_0v'=V_0+w'_1$ and $A_1v'=w'_0+V_1$. Hence $Av$ contains  $A_0v'+A_1v' = V_0+V_1+w'_0+w'_1=V_0+V_1=V$.

\end{proof}

\begin{prop} Consider the quadratic form $Q(x_1, \ldots, x_p,y_1, \ldots y_q)=x_1^2 + \cdots + x_p^2 - y_1^2 - \cdots - y_q^2$ where $p \geq 1$ and $q \geq 2$. Then $SO(Q)(\bZ)$ contains a subgroup acting irreducibly on $\bR^d$ that is generated by finitely many unipotent  elements.

\end{prop}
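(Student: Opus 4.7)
The plan is to build the desired subgroup by gluing together finitely many copies of the three-dimensional irreducible subgroup of $\operatorname{SO}(x^2-y^2-z^2)(\bZ)$ already constructed in Example~\ref{eg: x^2 +y^2 - z^2 intro}, using Lemma~\ref{Gluing irreducible reps} inductively. Let $e_1,\ldots,e_p,f_1,\ldots,f_q$ denote the standard orthogonal basis of $V=\bR^{p+q}$ with $Q(e_i)=1$ and $Q(f_j)=-1$. For any $i \in \{1,\ldots,p\}$ and $1 \leq j < j' \leq q$, the restriction of $Q$ to the three-plane $\operatorname{span}(e_i,f_j,f_{j'})$ has signature $(1,2)$ and is equivalent over $\bZ$ to $x^2-y^2-z^2$, so by the cited Example it carries a subgroup $H_{i,j,j'}$ acting irreducibly on this three-plane and generated by finitely many unipotents. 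Extending each generator by the identity on the orthogonal complement keeps it unipotent and produces a subgroup of $\operatorname{SO}(Q)(\bZ)$ acting trivially off the three-plane; we continue to denote it $H_{i,j,j'}$.

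The construction then proceeds in two phases. In the first, start with $H_{1,1,2}$, which acts irreducibly on $\operatorname{span}(e_1,f_1,f_2)$ and trivially elsewhere. For $j=3,\ldots,q$, glue $H_{1,j-1,j}$ onto the group built so far by applying Lemma~\ref{Gluing irreducible reps} with $W=\operatorname{span}(e_1,f_{j-1})$, $W_1=\operatorname{span}(f_j)$, and $W_0$ equal to the remaining previously covered directions. The hypotheses of the gluing lemma are immediate from the construction, and the output is a subgroup acting irreducibly on $\operatorname{span}(e_1,f_1,\ldots,f_q)$ (still trivial on $\operatorname{span}(e_2,\ldots,e_p)$), generated by finitely many unipotents. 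In the second phase, for each $i=2,\ldots,p$, glue in $H_{i,1,2}$ using the gluing lemma with $W=\operatorname{span}(f_1,f_2)$, $W_1=\operatorname{span}(e_i)$, and $W_0$ equal to the remaining previously covered directions. After all of these gluings we obtain a subgroup $\Gamma \leq \operatorname{SO}(Q)(\bZ)$ that acts irreducibly on all of $V$ and is generated by the finite union of the unipotent generating sets of the $H_{i,j,j'}$ used.

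The only real subtlety is the bookkeeping: verifying at each step that the two pieces being glued act trivially on each other's complementary subspaces and share a non-trivial common subspace $W$, which is built into the choice of the three-planes used. All remaining content is a direct combination of Example~\ref{eg: x^2 +y^2 - z^2 intro} and Lemma~\ref{Gluing irreducible reps}, and the Magyar-type conclusion for $Q$ then follows by applying Theorem~\ref{thm: unipotent generators intro} and noting that $Q(ke_1+kf_1)=0$ while $Q(k(e_1+f_2),0,\ldots)=0$ etc., so that a non-trivial subgroup of $\bZ$ lies in $Q(k\bZ^d)$ for any $k$.
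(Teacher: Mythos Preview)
Your proof is correct and follows essentially the same approach as the paper: both construct the three-dimensional unipotent-generated irreducible pieces $H_{i,j,j'}$ (the paper's $\Gamma_{a,b,c}$) from Example~\ref{eg: x^2 +y^2 - z^2 intro} and then glue them together iteratively via Lemma~\ref{Gluing irreducible reps}; you have simply made explicit the ``suitable combinations'' that the paper leaves to the reader. Your final paragraph on the Magyar-type conclusion is extraneous to the proposition as stated and the specific examples you list (both evaluating to $0$) are not quite what is needed, but this does not affect the main argument.
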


\begin{proof} Let $e_1, \ldots, e_p, u_1, \ldots,u_q$ denote the standard basis for $\bR^{p+q}$. For $1 \leq a \leq p$ and $1 \leq b <c \leq q$ let $V_{a,b,c}$ be the vector space spanned by $e_a,u_b,u_c$. Then we have a group $\Gamma_{a,b,c} \leq \operatorname{SL}_d(\bZ)$, generated by a finite set of unipotents, which acts irreducibly on $V_{a,b,c}$, preserves $x_a^2-y_a^2-y_b^2$ and acts trivially on the complement (w.r.t. this basis) of $V_{a,b,c}$. By iteratively applying Lemma~\ref{Gluing irreducible reps} to suitable combinations of the $\Gamma_{a,b,c}$, we may construct the desired subgroup. 

\end{proof}

\end{document}